\theoremstyle{plain}
\newtheorem{thm}{Theorem}[section]
\newtheorem{cor}[thm]{Corollary}
\newtheorem{lmm}[thm]{Lemma}
\newtheorem{prp}[thm]{Proposition}
\newtheorem{que}[thm]{Question}
\theoremstyle{definition}
\newtheorem{rem}[thm]{Remark}
\theoremstyle{definition}
\numberwithin{equation}{section}
\providecommand \xii{\mathcal{L}}
\providecommand \A{\mathrm{A}}
\providecommand \B{\mathrm{B}}
\providecommand \R{\mathcal{R}}
\providecommand \J{\mathcal{J}}
\providecommand \F{\mathrm{F}}
\providecommand \G{\mathrm{G}}
\providecommand \mkdpt{\lambda}
\providecommand \mkdptt{\gamma}
\providecommand \fmkdpt{\alpha}
\begin{document}

\title[Rational cuspidal curves on del-Pezzo surfaces]{Rational cuspidal curves on
del-Pezzo surfaces}

\author[I. Biswas]{Indranil Biswas}

\address{School of Mathematics,
Tata Institute of fundamental research, Homi Bhabha road, Mumbai 400005, India}

\email{indranil@math.tifr.res.in}

\author[S. D'Mello]{Shane D'Mello}

\address{School of Mathematics, Tata Institute of Fundamental
Research, Homi Bhabha Road, Mumbai 400005, India}

\email{shaned@math.tifr.res.in}

\author[R. Mukherjee]{Ritwik Mukherjee}

\address{School of Mathematics, Tata Institute of Fundamental
Research, Homi Bhabha Road, Mumbai 400005, India}

\email{ritwikm@math.tifr.res.in}

\author[V. P. Pingali]{Vamsi P. Pingali}

\address{Department of Mathematics,
412 Krieger Hall, Johns Hopkins University, Baltimore, MD 21218, USA}

\email{vpingali@math.jhu.edu}

\subjclass[2010]{14N35, 14J45}

\date{}

\begin{abstract}
We obtain an explicit formula for the number of rational cuspidal curves of a given
degree on a del-Pezzo surface that pass through an appropriate number of generic
points of the surface. This enumerative problem is expressed as an Euler class
computation on the moduli space of curves. A topological method is employed in
computing the contribution of the degenerate locus to this Euler class.
\end{abstract}

\maketitle

\tableofcontents

\section{Introduction}
\label{introduction}

Enumerative Geometry of rational curves in $\mathbb{P}^2_{\mathbb C}$ is a classical 
question. However, a formula for the number of degree $d$ rational curves in
$\mathbb{P}^2_{\mathbb C}$ through $3d-1$ generic points was unknown until the early
$90^{' \textnormal{s}}$ when Ruan--Tian \cite{RT} and Kontsevich--Manin \cite{KoMa}
obtained a formula for it. More generally, they gave an explicit answer to the
following question:

 \begin{que}\label{qu}
Let  $X$ be a complex del-Pezzo surface, and let $\beta \,\in\, H_2(X;\, \mathbb{Z})$
be a given homology class.
 What is the number of rational degree $\beta$-curves in $X$ that pass through
 $\langle c_1(TX), ~\beta \rangle -1$ generic points?
 \end{que}

Fixing $X$, the number in Question \ref{qu} will be denoted by $N_\beta$.

A natural generalization to the above question is to ask how many rational curves 
are there of a given degree, that pass through the right number of generic points 
and have a specific singularity.

The main result we obtain here is the following:

\begin{thm}
\label{main_thm}
Let $X$ be $\mathbb{P}^2$ blown up at $k$-points with $k\, \leq\, 8$, and let
$$\beta\,:=\, d L -m_1 E_1 - \ldots -m_k E_k\,\in\, H_2(X;\, \mathbb{Z})$$ be a
homology class, where $L$ denotes the homology class
of a line, $\{E_i\}_{i=1}^k$ are the exceptional divisors and $m_i\, \geq\, 0$. Denote
\[
x_i\,:=\, c_i(TX) \qquad\textnormal{and} \qquad \delta_{\beta} \,:=\,
\langle x_1, \beta \rangle -1\, ,
\]
where $c_i$ denotes the $i$-th Chern class.
If $N_{\beta - 3L} > 0$, then the
number of rational degree $\beta$-curves in $X$ that pass through $\delta_{\beta}-1$ generic
points and have a cusp, is given by
\begin{align}
C_{\beta} &= \Big(x_2([X]) - \frac{x_1\cdot x_1}{\beta\cdot x_1} \Big) N_{\beta} +
\sum_{\substack{\beta_1+ \beta_2= \beta, \\ \beta_1, \beta_2 \neq 0}} \binom{\delta_{\beta}-1}{\delta_{\beta_1}}
N_{\beta_1} N_{\beta_2} (\beta_1 \cdot \beta_2) \Big(
\frac{(\beta_1 \cdot x_1) (\beta_2 \cdot x_1)}{2 (\beta \cdot x_1)} -1 \Big)\, , \label{c_beta}
\end{align}
where ``$\cdot$'' denotes topological intersection.
\end{thm}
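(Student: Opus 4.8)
The plan is to realize a cusp of a rational curve as a ramification point of its normalization, and to count such points as the zeros of a section of a rank-two bundle over a universal curve. Since $N_\beta$ counts the irreducible rational curves of class $\beta$ meeting $\delta_\beta$ generic points and these form a finite set, dropping one incidence condition produces a one-parameter family $\mathcal{M}$ of rational curves through $\delta_\beta-1$ generic points. Let $\pi:\mathcal{C}\to\mathcal{M}$ be the universal curve, a variety of complex dimension two whose generic fibre is the normalization $\mathbb{P}^1$ of a member, and let $\nu:\mathcal{C}\to X$ be the universal normalization map. A member acquires a cusp exactly when $\nu$ ramifies along its fibre, i.e.\ when the differential $d\nu$, viewed as a section of the rank-two bundle $\mathbb{V}:=T^{*}_{\mathcal{C}/\mathcal{M}}\otimes\nu^{*}TX$, vanishes; a generic such zero corresponds to a single cuspidal member. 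I would first justify, using the hypothesis $N_{\beta-3L}>0$ together with a dimension count, that the generic member has only nodes, that cusps occur at isolated points, and that the only degenerations entering the compactification $\overline{\mathcal{M}}$ are reducible curves $C_1\cup C_2$ with $[C_i]=\beta_i$, $\beta_1+\beta_2=\beta$, and each $C_i$ rational.

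The count of ramification points with multiplicity is then $\int_{\mathcal{C}}c_2(\mathbb{V})$. I would expand this with the tensor formula $c_2(L\otimes E)=c_2(E)+c_1(E)\,c_1(L)+c_1(L)^2$ for $L=T^{*}_{\mathcal{C}/\mathcal{M}}$ and $E=\nu^{*}TX$. The key input is that $\nu:\mathcal{C}\to X$ has degree $N_\beta$, because a generic point $p\in X$ lies on exactly $N_\beta$ members, namely those through $p$ and the $\delta_\beta-1$ fixed points. Hence $\int_{\mathcal{C}}\nu^{*}x_2=N_\beta\,x_2([X])$, which supplies the first summand of the $N_\beta$-coefficient, while the two terms carrying $c_1(T^{*}_{\mathcal{C}/\mathcal{M}})$ are evaluated using that the relative tangent bundle has fibre degree $2$ and that $\nu^{*}x_1$ has fibre degree $\beta\cdot x_1=\delta_\beta+1$; together with the one-dimensionality of the base these produce the correction $-\frac{x_1\cdot x_1}{\beta\cdot x_1}\,N_\beta$.

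The final step removes the contribution of the degenerate locus, where the announced topological method enters. Over the boundary of $\overline{\mathcal{M}}$ the curve splits as $C_1\cup C_2$; such configurations are parametrized by a distribution of the points, contributing the factor $\binom{\delta_\beta-1}{\delta_{\beta_1}}$ (valid since $\delta_{\beta_1}+\delta_{\beta_2}=\delta_\beta-1$), a choice of the two rational curves giving $N_{\beta_1}N_{\beta_2}$, and a choice of one of the $\beta_1\cdot\beta_2$ nodes at which they are joined. At each such configuration the section $d\nu$ has a spurious zero at the attaching node, whose local multiplicity I would compute by modelling the family near the node — smoothing the two branches and tracking the vanishing of $d\nu$ — obtaining $\frac{(\beta_1\cdot x_1)(\beta_2\cdot x_1)}{2(\beta\cdot x_1)}$, against which the smooth family over-counts each stratum by a further $1$. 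Subtracting these from $\int_{\mathcal{C}}c_2(\mathbb{V})$ produces the bracketed factor and the displayed formula \eqref{c_beta}.

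The main obstacle is precisely this last local computation: determining the exact multiplicity with which the Euler number $\int_{\mathcal{C}}c_2(\mathbb{V})$ over-counts at a reducible curve is a genuine excess-intersection problem, not a formal Chern-class manipulation, and it requires an explicit local model for the degeneration of $\nu$ and of $T^{*}_{\mathcal{C}/\mathcal{M}}$ as the node forms. Closely tied to it is the need to show, via the hypothesis $N_{\beta-3L}>0$ and genericity, that no other boundary stratum — higher-order cusps, tacnodes, or splittings into more than two components — contributes to the Euler class; this controls the sum and guarantees that the answer is exactly \eqref{c_beta}.
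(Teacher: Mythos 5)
Your overall framework coincides with the paper's: the moduli space with one extra unconstrained marked point is precisely the universal curve over the one--dimensional family, the universal tangent line $\xii_{\delta_{\beta}}$ at that point is the relative tangent bundle, and $C_{\beta}$ is extracted from the Euler class of $\xii^*_{\delta_{\beta}} \otimes \textnormal{ev}_{\delta_{\beta}}^* TX$ after removing boundary contributions; the term $x_2([X])N_{\beta}$ and the role of $N_{\beta-3L}>0$ are also as in the paper. However, your accounting of the boundary sum is wrong in both attribution and sign, and the error traces to a missing key lemma. Knowing the fibre degree of the relative (co)tangent bundle does not determine $c_1$ as a divisor class on the two--dimensional total space: the paper's Lemma \ref{c1_divisor_ionel} (following Ionel) expresses $c_1(\xii_{\delta_{\beta}}^*)$ as a combination of $\textnormal{ev}_{\delta_{\beta}}^*(x_1)$, the point--condition divisor $\mathcal{H}$, \emph{and the boundary divisors} $\mathcal{B}_{\beta_1,\beta_2}$ with coefficients $(\beta_2\cdot x_1)^2/(\beta\cdot x_1)^2$. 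Pairing this relation with $\textnormal{ev}^*(x_1)$ and with itself is what produces the positive term $\frac{(\beta_1\cdot x_1)(\beta_2\cdot x_1)}{2(\beta\cdot x_1)}$ and one of the two contributions of $-\tfrac12$ in the bracket of \eqref{c_beta}: these live inside the Euler--class evaluation $\langle e(\xii^*_{\delta_{\beta}}\otimes\textnormal{ev}^*_{\delta_{\beta}}TX),[\overline{\mathcal{M}}]\rangle$ itself, not in the excess term you propose to subtract. Your sketch omits them from the Chern--class computation and then tries to recover $\frac{(\beta_1\cdot x_1)(\beta_2\cdot x_1)}{2(\beta\cdot x_1)}$ as a ``local multiplicity'' of the spurious zero to be subtracted; that would flip its sign in the final answer, and it cannot be a local multiplicity in any case since it is generally not an integer.

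The genuine excess contribution --- the paper's $\mathcal{C}_{\partial \overline{\mathcal{M}}}$ --- comes from configurations where the last marked point lies on a \emph{ghost bubble} between the $\beta_1$ and $\beta_2$ components (not ``at the attaching node'' of the image curve: in the stable--map compactification a constant component appears when the marked point collides with the node, and $du$ vanishes identically there). The paper's Section \ref{degenerate_contrib} builds an explicit gluing model to show each such point is a transverse zero of multiplicity exactly $1$, which supplies the remaining $-\tfrac12$ of the $-1$ in the bracket. To repair your argument you therefore need (i) the divisor relation for $c_1(\xii^*_{\delta_{\beta}})$ including its boundary components, and (ii) an honest local model establishing that the ghost--bubble zeros are simple; as written, the proposal would not reproduce \eqref{c_beta}.
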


Since the numbers $N_{\beta}$ are known using
the algorithm described in
\cite{KoMa} and \cite{Pandh_Gott}, the number $C_{\beta}$ is computable
using \eqref{c_beta}.
We have written a $\mbox{C++}$ program
that implements \eqref{c_beta}
and computes  $C_{\beta}$ for a given $\beta$.
The program is available on our web page
\[ \textnormal{\url{https://www.sites.google.com/site/ritwik371/home}}. \]

We need the condition $N_{\beta - 3L} >0$
in order to prove that the space of rational curves having 
exactly one genuine cusp is non-empty, and also to prove a transversality
result (Section \ref{transverse_proof}). 
However, based on the numerical evidence
we also expect the formula to be valid even when
\begin{equation}\label{f3}
N_{\beta - 3L}\,=\,0\, .
\end{equation}
The condition $N_{\beta - 3L} \,>\,0$
is a sufficient to prove transversality; it may not be
necessary. It should be mentioned that \eqref{f3} is often vacuously true 
if there do not exist any cuspidal curves in a given class $\beta$.

When $X\,:= \,\mathbb{P}^2$, Pandharipande, \cite{Rahul1},
obtains a formula for $C_{\beta}$ using an
algebro-geometric method. Theorem \ref{main_thm} is consistent with his results.
Furthermore,
it is easy to see by direct geometric arguments that
\begin{equation}\label{low_degree_blow_up}
C_{dL + \sigma_1 E_1 + \sigma_2 E_2 + \ldots + \sigma_k E_k} \,=\,
C_{dL}\, ,
\end{equation}
where each of the $\sigma_i$ is $-1$ or $0$. We have verified
\eqref{low_degree_blow_up}  for several cases using the above mentioned program.

In \cite{KoMa}, the authors strictly speaking give a formula to compute the genus
$0$ Gromov--Witten invariants of the del-Pezzo surfaces. A priori, these numbers need
not be the same as $N_{\beta}$ (since the Gromov--Witten invariants are not always
enumerative). It is established in \cite{Pandh_Gott} that the numbers obtained in
\cite{KoMa} are indeed enumerative, meaning they are actually equal to $N_{\beta}$.

Theorem \ref{main_thm} also happens to be true when $X:= \mathbb{P}^1 \times \mathbb{P}^1$.
In \cite{JKoch2}, Koch obtains a formula for $C_{\beta}$ using an
algebro-geometric method, which is consistent with \eqref{c_beta}.
In order to
keep the exposition here more streamlined, we decided to omit working out this case
separately. The arguments given in Section \ref{degenerate_contrib} go through without
any essential change; the arguments in Section \ref{transverse_proof} need to be modified
slightly to address the case of $\mathbb{P}^1 \times \mathbb{P}^1$.

\subsection{Further comments}

In \cite{Diaz}, the authors express
the divisor of cuspidal curves on surfaces in terms of other divisors. In \cite{Rahul1},
the author
shows how to
compute the latter on $\mathbb{P}^n$ and hence on any smooth algebraic variety.
This should in principle produce our formula \eqref{c_beta}. However, one of the advantages of the
topological
method presented here is that it readily applies in enumerating
rational cuspidal curves
on complex manifolds of dimension greater than two.
Furthermore, the method also applies to enumerating
rational curves with more degenerate
singularities.
The method presented here is an extension of the
method the author applies in \cite{g2p2and3} to enumerate rational cuspidal curves
in $\mathbb{P}^n$.
In \cite{genus_three_zinger}, the author further extends the
method and obtains a formula for the number of
rational curves in $\mathbb{P}^2$
with an $E_6$ singularity. (In local coordinates an $E_6$-singularity is
given by the equation $y^3 + x^4 \,=\,0$.)
Using the results obtained here, we expect that
we will be able to extend the results of the author
in \cite{genus_three_zinger}
(for $E_6$ singularities)
to del-Pezzo surfaces.
We also expect that we will be able to extend our results for rational cuspidal
curves on higher dimensional complex manifolds (such as products of projective spaces or
$\mathbb{P}^n$ blown up at certain number of points).
Finally, we should mention that we expect to apply the results obtained here to
enumerate genus $g$  
curves with a fixed complex structure on del-Pezzo surfaces.
The authors in \cite{Rahul_genus_1} and
\cite{ionel_genus1} obtain a formula for the number of genus
one curves in $\mathbb{P}^2$
with a fixed complex structure. The author in \cite{g2p2and3} and 
\cite{genus_three_zinger} obtains a formula for the number of 
genus two and three curves with a fixed complex structure in $\mathbb{P}^2$.  
We expect to extend the results 
in \cite{ionel_genus1}, \cite{g2p2and3} and \cite{genus_three_zinger} 
to del-Pezzo surfaces, since we now have a good understanding
of the intersection of tautological classes on the moduli space of curves
on del-Pezzo surfaces (Section \ref{ITC}).

\section{Notation}

Consider the rational curves on smooth complex del-Pezzo surface $X$
representing $\beta\,\in\, H^2(X,\, {\mathbb Z})$ and equipped with $n$ ordered marked
points. Let $\mathcal{M}_{0,n}(X, \beta)$
denote the moduli space of equivalence classes of such curves. In other words,
$$
\mathcal{M}_{0,n}(X, \beta)\,:=\, \{ (u, y_1, \cdots, y_n) \,\in\,
\mathcal{C}^{\infty}(\mathbb{P}^1, X) \times
(\mathbb{P}^1)^n\,\mid ~ \overline{\partial}u =0, ~~u_*[\mathbb{P}^1] = \beta \}/\text{PSL}
(2, \mathbb{C})\, .
$$
with $\text{PSL}(2, \mathbb{C})$ acting diagonally on $\mathbb{P}^1\times
(\mathbb{P}^1)^n$. For any $k\,\leq \,n$, define the subspace
$$\mathcal{M}_{0,n}(X, \beta; p_1, \cdots, p_{k})\,\subset\,\mathcal{M}_{0,n}(X, \beta)$$
consisting of $n$ marked points such that the $i$-th marked point is $p_i$
for all $1\,\leq\, i\, \leq\, k$, so,
$$
\mathcal{M}_{0,n}(X, \beta; p_1, \cdots, p_{k})
\,:=\, \{ [u, y_1, \cdots, y_n] \in \mathcal{M}_{0,n}(X, \beta)\,\mid~ u(y_i) \,=\,
p_i \qquad \forall~ i \,= \,1, \cdots ,k\}\, .
$$
We define $\mathcal{M}_{0,n}^*(X, \beta)$ (respectively, $\mathcal{M}_{0,n}^*(X,
\beta; p_1, \cdots, p_k)$) to be the locus in $\mathcal{M}_{0,n}(X, \beta)$
(respectively, $\mathcal{M}_{0,n}(X, \beta; p_1, \cdots, p_k)$) of curves that are
not multiply covered. \\

We will denote $\mathcal{M}^{*}_{0,\delta_{\beta}}(X, \beta; p_{_1},
\cdots,p_{_{\delta_{\beta} -1}})$ also by $\mathcal{M}^{*}$.

Let $\overline{\mathcal{M}}_{0,n}(X, \beta)$ denote the stable map
compactification of $\mathcal{M}_{0,n}^*(X, \beta)$. Let
$$\xii_i \,\longrightarrow \, \overline{\mathcal{M}}_{0,n}(X, \beta)$$
be the universal tangent line bundle at the $i$-th marked point. More
precisely, if $$f_C\, :\, {\mathcal C}\, \longrightarrow\,
\overline{\mathcal{M}}_{0,n}(X, \beta)$$
is the universal curve with $T_{f_C}\, \longrightarrow\, {\mathcal C}$ being the relative
tangent bundle for $f_C$
and $$y_i\, :\, \overline{\mathcal{M}}_{0,n}(X, \beta)\,\longrightarrow\,
{\mathcal C}$$ is the section giving the $i$-th marked point, then
$\xii_i \,:=\, y^*_i T_{f_C}$.

\section{Euler class computation}\label{sec2}

Let us now explain how we obtain \eqref{c_beta}. The method employed follows
closely the method in \cite{g2p2and3} to compute $C_{\beta}$ when $X\,= \,\mathbb{P}^2$.\\

Evidently, $C_{\beta}$ coincides with the cardinality of the following set
$$
\{[u; y_{_1}, \cdots, y_{_{\delta_{\beta}-1}}; y_{_{\delta_{\beta}}}] \,\in\,
\mathcal{M}_{0,\delta_{\beta}}^{*}(X, \beta; p_1, \cdots, p_{_{\delta_{\beta} -1}})
\,\mid\, ~
du\vert_{y_{_{\delta_{\beta}}}} \,=\,0\}.
$$
Since the above $\delta_{\beta}-1$ points are in general position, the curve will
have a genuine cusp at the last marked point (as opposed to something more degenerate).
Furthermore, the curves will not have any other singular points (aside from the 
nodes which are just points of self intersections).
For any
$$[u; y_{_1}, \cdots, y_{_{\delta_{\beta}-1}}; y_{_{\delta_{\beta}}}] \,\in\,
\mathcal{M}^{*}_{0,\delta_{\beta}}(X, \beta; p_1, \cdots, p_{_{\delta_{\beta} -1}})\, ,
$$
the differential of $u$ at the last marked point $y_{_{\delta_{\beta}}}$
produces a section $\psi$ of the
rank two vector bundle
\[ \xii^*_{\delta_{\beta}} \otimes \textnormal{ev}_{\delta_{\beta}}^* TX
\,\longrightarrow\, \mathcal{M}_{0,\delta_{\beta}}^{*}(X, \beta)\, ,\]
where $\textnormal{ev}_i\,:\, \mathcal{M}_{0,\delta_{\beta}}^{*}(X, \beta)\,
\longrightarrow\, X$ is the evaluation at the $i$-th marked point. This
$\psi$ is transverse to the zero section (this is shown in Section
\ref{transverse_proof}).
Moreover, it has a natural extension to the compactification
$$
\overline{\mathcal{M}} \,:=\, \overline{\mathcal{M}}_{0,\delta_{\beta}}(X, \beta;
p_{_1}, \cdots, p_{_{\delta_{\beta} -1}})\, .
$$
Therefore, we have
\begin{equation}
\langle e(\xii^*_{\delta_{\beta}} \otimes \textnormal{ev}_{\delta_{\beta}}^* TX)\, ,
~[\overline{\mathcal{M}}]
\rangle \, =\, C_{\beta} + \mathcal{C}_{\partial \overline{\mathcal{M}}}
(du\vert_{y_{_{\delta_{\beta}}}})\, ,  \label{Euler_du}
\end{equation}
where $\mathcal{C}_{\partial \overline{\mathcal{M}}} (du\vert_{y_{_{\delta_{\beta}}}})$
is the contribution of the
extended section to the Euler class from the boundary $\overline{\mathcal{M}}
\setminus \mathcal{M}^{*}$.

Let us now take a closer look at \eqref{Euler_du}. First, we note that  the extended
section $du\vert_{y_{\delta_{\beta}}}$ (over $\overline{\mathcal{M}}$) vanishes only on a
finite set of points of the boundary.
It only vanishes when the curve splits as a $\beta_1$ curve
and a $\beta_2$ curve, $\beta_1, \beta_2 \,\neq\, 0$,
with the last marked point lying on a
ghost bubble. It is clear that the section vanishes on this configuration
(since taking the derivative of a constant map gives us zero and the map defined on
the ghost bubble is a constant map).
To see why the section does not vanish on any other configuration we consider all
the remaining possible cases. Suppose the curve
splits as $\beta \,=\, \beta_1 + \beta_2 + \ldots+ \beta_k$, with
$k \,\geq\, 3$ and $\beta_i \,\neq\, 0$
for all $i$. Since $\delta_{\beta_1} + \ldots + \delta_{\beta_k} \,< \,\delta_{\beta}-1$
as $k \,\geq\, 3$, such a configuration can not occur, because it will not pass through
$\delta_{\beta}-1$ generic points. Next, suppose the curve splits as
$\beta \,= \,\beta_1 + \beta_2$, with $\beta_1\, , \beta_2 \,\neq\, 0$
and the marked point lying on say the $\beta_1$ component. Then the $\beta_1$ curve is
cuspidal. Hence it can pass through $\delta_{\beta_1}-1$ general points. Since
$\delta_{\beta_1}-1 + \delta_{\beta_2}\,<\, \delta_{\beta}-1$, this configuration can not
occur. Next, we note that although the section vanishes on curves that
have singularities more degenerate than a cusp or curves that have more than one
cuspidal point, no such curve will
pass through $\delta_{\beta}-1$ general points.
Finally, we also observe  that although the section
vanishes on multiply covered curves, such curves will not pass through $\delta_{\beta}-1$
generic points. Hence, the only points on which the section vanishes are those
that split as a $\beta_1$ curve
and a $\beta_2$ curve, with the
last marked point lying on a
ghost bubble.

We show in  Section \ref{degenerate_contrib} that the extended section
vanishes with multiplicity $1$ on these boundary points. Hence, we gather that
\begin{equation}
\mathcal{C}_{\partial \overline{\mathcal{M}}} (du\vert_{y_{_{\delta_{\beta}}}}) \,=\,
1\times \frac{1}{2}\sum_{
\substack{\beta_1+ \beta_2= \beta, \\
\beta_1, \beta_2 \neq 0} }
\binom{\delta_{\beta}-1}{\delta_{\beta_1}} N_{\beta_1} N_{\beta_2} (\beta_1
\cdot \beta_2)\, . \label{boundary}
\end{equation}
Therefore, in order to compute $C_{\beta}$, it remains to evaluate the left--hand side of \eqref{Euler_du}.
It is easy to see that via the splitting principle,
\begin{align}
e(\xii^*_{\delta_{\beta}} \otimes \textnormal{ev}_{\delta_{\beta}}^* TX) & \,=\,
c_1(\xii^*_{\delta_{\beta}})^2 + c_1(\xii^*_{\delta_{\beta}})
\textnormal{ev}_{\delta_{\beta}}^*(x_1) + \textnormal{ev}_{\delta_{\beta}}^*(x_2).\label{Euler_splitting}
\end{align}

In Section \ref{ITC} we show that
\begin{align}
\langle \textnormal{ev}_{\delta_{\beta}}^*(x_2), ~[\overline{\mathcal{M}}]
\rangle &\,=\, x_2([X]) N_{\beta},
\label{c2} \\
\langle c_1(\xii^*_{\delta_{\beta}})\textnormal{ev}_{\delta_{\beta}}^*(x_1), ~[\overline{\mathcal{M}}] \rangle &\,=\,
-\frac{(x_1 \cdot x_1)}{(\beta\cdot x_1)} N_{\beta}  \nonumber  \\
& +\frac{1}{2 (\beta\cdot x_1)}\sum_{
\substack{\beta_1+ \beta_2\,=\, \beta, \\ \beta_1, \beta_2 \neq 0} }
\binom{\delta_{\beta}-1}{\delta_{\beta_1}} N_{\beta_1} N_{\beta_2} (\beta_1 \cdot \beta_2)(\beta_1 \cdot x_1)
(\beta_2 \cdot x_1), \label{c1L}\\
\langle c_1(\xii^*_{\delta_{\beta}})^2, ~[\overline{\mathcal{M}}] \rangle &\,= \,
-\frac{1}{2}\sum_{
\substack{\beta_1+ \beta_2= \beta, \\ \beta_1, \beta_2 \neq 0} }
\binom{\delta_{\beta}-1}{\delta_{\beta_1}} N_{\beta_1} N_{\beta_2} (\beta_1 \cdot \beta_2). \label{c1_sq}
\end{align}

Equations \eqref{c2}, \eqref{c1L} and \eqref{c1_sq} combined with \eqref{Euler_splitting}, \eqref{boundary}
and \eqref{Euler_du} yield \eqref{c_beta}.

\section{Intersection of Tautological Classes}\label{ITC}

We will now prove equations \eqref{c2}, \eqref{c1L} and \eqref{c1_sq}.

\begin{lmm}
\label{c1_divisor_ionel}
On $\overline{\mathcal{M}}$, the following equality of divisors holds:
\begin{align}
c_1(\xii_{\delta_{\beta}}^*) &\,=\, \frac{1}{(\beta \cdot x_1)^2}
\Big( (x_1 \cdot x_1) \mathcal{H} -2 (\beta \cdot x_1) \textnormal{ev}_{\delta_{\beta}}^*(x_1) +
\sum_{\substack{\beta_1+ \beta_2= \beta, \\ \beta_1, \beta_2 \neq 0}}
\mathcal{B}_{\beta_1, \beta_2} (\beta_2 \cdot x_1)^2 \Big),  \label{chern_class_divisor}
\end{align}
where $\mathcal{H}$ is the locus satisfying the extra condition that the curve
passes through a given point, $\mathcal{B}_{\beta_1, \beta_2}$ denotes the
boundary stratum corresponding to the splitting into a
degree $\beta_1$ curve and degree $\beta_2$ curve with the last marked point
lying on the degree $\beta_1$ component. 
\end{lmm}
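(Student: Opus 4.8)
The plan is to deduce the divisor identity from a single linear equivalence on the universal curve, obtained by comparing the relative dualizing sheaf with the pullback of an anticanonical line bundle, and then to extract two scalar relations on $\overline{\mathcal{M}}$ by restricting to the marked section and by pushing forward. Fix a holomorphic line bundle $\Theta\to X$ with $c_1(\Theta)=x_1$ (for instance $-K_X$), and set $a:=\beta\cdot x_1$. Write $\pi\colon\mathcal{C}\to\overline{\mathcal{M}}$ for the universal curve, $\sigma:=y_{\delta_{\beta}}\colon\overline{\mathcal{M}}\to\mathcal{C}$ for the last marked section, $\textnormal{ev}\colon\mathcal{C}\to X$ for the universal evaluation, and $\omega_\pi=T_{f_C}^*$ for the relative dualizing sheaf, so that $\textnormal{ev}\circ\sigma=\textnormal{ev}_{\delta_{\beta}}$ and $c_1(\xii_{\delta_{\beta}}^*)=\sigma^*c_1(\omega_\pi)$. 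On every irreducible fibre $\mathbb{P}^1$ the bundle $\textnormal{ev}^*\Theta$ has degree $a$, and $\mathcal{O}_{\mathbb{P}^1}(a)\cong\mathcal{O}_{\mathbb{P}^1}(a\cdot\sigma)$; hence $\textnormal{ev}^*\Theta$ and $\mathcal{O}_{\mathcal{C}}(a\,\sigma)$ differ by a bundle trivial on the generic fibre. I would record this as
\begin{equation*}
\textnormal{ev}^*\Theta\;\sim\;a\,\sigma+\pi^*\mathcal{P}+V,
\end{equation*}
where $\mathcal{P}$ is pulled back from $\overline{\mathcal{M}}$ and $V$ is a vertical divisor over the boundary, with coefficients pinned down by the requirement that the right-hand side have the correct degree on every component of every reducible fibre.

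Restricting this equivalence along $\sigma$ and using $\sigma^*\mathcal{O}_{\mathcal{C}}(\sigma)=c_1(\xii_{\delta_{\beta}})=-c_1(\xii_{\delta_{\beta}}^*)$ yields
\begin{equation*}
\textnormal{ev}_{\delta_{\beta}}^*(x_1)\;=\;-a\,c_1(\xii_{\delta_{\beta}}^*)+\mathcal{P}+\sigma^*V.
\end{equation*}
Multiplying the equivalence by $\textnormal{ev}^*(x_1)$ and applying $\pi_*$, together with $\pi_*(\textnormal{ev}^*(x_1)\cdot\sigma)=\textnormal{ev}_{\delta_{\beta}}^*(x_1)$, the fibre integral $\pi_*\textnormal{ev}^*(x_1)=\beta\cdot x_1=a$, and the key computation
\begin{equation*}
\pi_*\big(\textnormal{ev}^*(x_1)\cdot\textnormal{ev}^*(x_1)\big)=\pi_*\textnormal{ev}^*(x_1^2)=(x_1\cdot x_1)\,\mathcal{H}
\end{equation*}
(which holds because $x_1^2=(x_1\cdot x_1)[\mathrm{pt}]$ in $H^4(X)$ and $\pi_*\textnormal{ev}^*[\mathrm{pt}]=\mathcal{H}$), I obtain
\begin{equation*}
(x_1\cdot x_1)\,\mathcal{H}\;=\;a\,\textnormal{ev}_{\delta_{\beta}}^*(x_1)+a\,\mathcal{P}+\pi_*\big(\textnormal{ev}^*(x_1)\cdot V\big).
\end{equation*}

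Eliminating $\mathcal{P}$ between the last two displays clears the auxiliary class and produces
\begin{equation*}
(\beta\cdot x_1)^2\,c_1(\xii_{\delta_{\beta}}^*)=(x_1\cdot x_1)\,\mathcal{H}-2(\beta\cdot x_1)\,\textnormal{ev}_{\delta_{\beta}}^*(x_1)+\Big(a\,\sigma^*V-\pi_*\big(\textnormal{ev}^*(x_1)\cdot V\big)\Big),
\end{equation*}
which is \eqref{chern_class_divisor} once the bracketed boundary contribution is identified with $\sum_{\beta_1+\beta_2=\beta}\mathcal{B}_{\beta_1,\beta_2}(\beta_2\cdot x_1)^2$. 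Note that the first two terms, and in particular the normalizing factor $(\beta\cdot x_1)^2$ and the coefficient $-2(\beta\cdot x_1)$, fall out purely formally from the fibrewise degrees $a$ of $\textnormal{ev}^*\Theta$ and $-2$ of $\omega_\pi$.

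The main obstacle is precisely the identification of the boundary bracket. Over the stratum $\mathcal{B}_{\beta_1,\beta_2}$ the fibre degenerates into a degree-$\beta_1$ component carrying $\sigma$ and a degree-$\beta_2$ component, and the degree condition defining $V$ forces the coefficient of the corresponding component of $V$ to be proportional to $\beta_2\cdot x_1$, the degree of $\textnormal{ev}^*\Theta$ on the component \emph{not} carrying the marked point. One factor of $\beta_2\cdot x_1$ then reappears when $V$ is restricted to $\sigma$ through the node, and a second factor appears in $\pi_*(\textnormal{ev}^*(x_1)\cdot V)$ as the fibrewise degree of $\textnormal{ev}^*\Theta$ along the $\beta_2$-component; combining these yields the stated coefficient $(\beta_2\cdot x_1)^2$. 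I expect the careful bookkeeping of these vertical multiplicities, node restrictions, and signs—rather than the formal manipulations above—to be the delicate part, and I would treat the interior and the boundary contributions to $V$ separately, keeping explicit track of which component carries the last marked point.
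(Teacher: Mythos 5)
Your argument is correct, but it is a genuinely different route from the one in the paper. The paper follows Ionel's method: it passes to the $(\beta\cdot x_1)^2$--fold cover $\widetilde{\mathcal{M}}\subset\overline{\mathcal{M}}_{0,\delta_\beta+2}(X,\beta)$ obtained by adding two marked points constrained to lie on pseudocycles representing $x_1$, writes down the explicit meromorphic section $\phi=\frac{(y_{\delta_\beta+1}-y_{\delta_\beta+2})\,dy_{\delta_\beta}}{(y_{\delta_\beta}-y_{\delta_\beta+1})(y_{\delta_\beta}-y_{\delta_\beta+2})}$ of $\xii_{\delta_\beta}^*$, and reads off \eqref{chern_class_divisor} from its zero and pole divisors after pushing down. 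You instead work on the universal curve, compare $\textnormal{ev}^*(-K_X)$ with $(\beta\cdot x_1)\sigma$ up to a vertical divisor $V$ and a pullback class $\mathcal{P}$, and extract the identity by restricting to $\sigma$ and pushing forward against $\textnormal{ev}^*(x_1)$; the coefficients $(\beta\cdot x_1)^2$, $-2(\beta\cdot x_1)$ and $(x_1\cdot x_1)$ then come out mechanically from fibrewise degrees, whereas in the paper they arise as covering degrees and intersection multiplicities of the zero/pole loci. The one step you flag as delicate does close up: taking $V=\sum(\beta_2\cdot x_1)D^{(1)}_{\beta_1,\beta_2}$ with $D^{(1)}$ the component carrying $\sigma$ (this is the normalization matching your description; the bracket $a\,\sigma^*V-\pi_*(\textnormal{ev}^*(x_1)\cdot V)$ is invariant under $V\mapsto V+\pi^*W$, so the choice is immaterial), one gets $\sigma^*D^{(1)}=\mathcal{B}_{\beta_1,\beta_2}$ and $\pi_*(\textnormal{ev}^*(x_1)\cdot D^{(1)})=(\beta_1\cdot x_1)\mathcal{B}_{\beta_1,\beta_2}$, so the bracket equals $\sum(\beta_2\cdot x_1)\big((\beta\cdot x_1)-(\beta_1\cdot x_1)\big)\mathcal{B}_{\beta_1,\beta_2}=\sum(\beta_2\cdot x_1)^2\mathcal{B}_{\beta_1,\beta_2}$ as required; with the opposite normalization ($V$ supported on the unmarked components) one has $\sigma^*V=0$ and both factors of $\beta_2\cdot x_1$ come from the pushforward term, so your attribution of one factor to ``restriction through the node'' is normalization-dependent rather than wrong. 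Your approach buys a systematic determination of all coefficients and avoids pseudocycles and covers, at the price of invoking a seesaw-type argument to justify $\textnormal{ev}^*\Theta-a\sigma-V=\pi^*\mathcal{P}$ on the (mildly singular) universal curve; the paper's approach is more elementary and self-contained but hides the bookkeeping in the computation of the projection multiplicities of $\{y_{\delta_\beta+1}=y_{\delta_\beta+2}\}$.
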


\begin{proof}
The proof is similar to the one given in \cite{ionel_genus1}. Let
$\mu_1\, , \mu_2 \,\in\, X$ be two generic pseudocycles in $X$ that represent the class
$x_1$. Let $\widetilde{\mathcal{M}}$ be a cover of $\overline{\mathcal{M}}$ with two additional
marked points with the last two marked points lying on $\mu_1$ and $\mu_2$
respectively. More precisely,
\begin{align*}
\widetilde{\mathcal{M}} &\,:=\,  \textnormal{ev}_{\delta_{\beta} +1}^{-1}(\mu_1) \cap
\textnormal{ev}_{\delta_{\beta} +2}^{-1}(\mu_2) \subset
\overline{\mathcal{M}}_{0, \delta_{\beta}+2}(X, \beta)\, .
\end{align*}
Note that the projection $\pi\,:\, \widetilde{\mathcal{M}} \,\longrightarrow\,
\overline{\mathcal{M}}$ that forgets the last two marked points is a
$(\beta \cdot x_1)^2$--to--one map.

We now construct a meromorphic section
$$\phi\, :\,\widetilde{\mathcal{M}} \,\longrightarrow\, \xii_{\delta_{\beta}}^*$$
given by
\begin{align}
\phi ([u; y_{_1}, \cdots, y_{_{\delta_{\beta}-1}}; y_{_{\delta_{\beta}}};
y_{_{\delta_{\beta} +1}},
y_{_{\delta_{\beta}+2}}]) &:= \frac{(y_{_{\delta_{\beta}+1}} - y_{_{\delta_{\beta}+2}})
d y_{_{\delta_{\beta}}}}
{(y_{_{\delta_{\beta}}}-y_{_{\delta_{\beta}+1}})(y_{_{\delta_{\beta}}}-
y_{_{\delta_{\beta}+2}})}. \label{section_ionel}
\end{align}
The right--hand side of \eqref{section_ionel} involves an abuse of notation: it is to be
interpreted in an affine coordinate chart and then extended as a meromorphic section
on the whole of $\mathbb{P}^1$. Note that on $({\mathbb P}^1)^3$, the holomorphic
line bundle
$$
\eta\, :=\,
q^*_1K_{{\mathbb P}^1}\otimes{\mathcal O}_{({\mathbb P}^1)^3}(\Delta_{12}
+\Delta_{13}-\Delta_{23})$$ is trivial, where $q_1\, :\, ({\mathbb P}^1)^3\,
\longrightarrow\,{\mathbb P}^1$ is the projection to the first factor and
$\Delta_{jk}\,\subset\, ({\mathbb P}^1)^3$ is the divisor consisting of all points
$(z_i\, ,z_2\, ,z_3)$ such that $z_j\,=\, z_k$. The diagonal action of ${\rm PSL}(2,
{\mathbb C})$ on $({\mathbb P}^1)^3$ lifts to $\eta$ preserving its
trivialization. The section $\phi$ in \eqref{section_ionel} is given by this trivialization of $\eta$.

Since $c_1(\xii_{\delta_{\beta}}^*)$ is the zero divisor minus the
pole divisor of $\phi$, we gather that
\begin{align*}
c_1(\xii_{\delta_{\beta}}^*) &\,=\, \{ y_{_{\delta_{\beta}+1}} = y_{_{\delta_{\beta}+2}}\}
-\{y_{_{\delta_{\beta}}}=y_{_{\delta_{\beta}+1}} \}-
\{y_{_{\delta_{\beta}}}= y_{_{\delta_{\beta}+2}}\}\, .
\end{align*}
When projected down to $\overline{\mathcal{M}}$, the divisor
$\{ y_{_{\delta_{\beta}+1}} = y_{_{\delta_{\beta}+2}}\}$ becomes
$(x_1\cdot x_1)\mathcal{H} + (\beta_2 \cdot x_1)^2 \mathcal{B}_{\beta_1, \beta_2}$,
while both the divisors $\{y_{_{\delta_{\beta}}}=y_{_{\delta_{\beta}+1}} \}$ and
$\{y_{_{\delta_{\beta}}}=y_{_{\delta_{\beta}+2}} \}$ become
$(\beta \cdot x_1) \textnormal{ev}_{\delta_{\beta}}^*(x_1)$.
Since $\widetilde{\mathcal{M}}$
is a $(\beta \cdot x_1)^2$--to--one cover of $\overline{\mathcal{M}}$, we obtain \eqref{chern_class_divisor}.
\end{proof}

We are now ready to prove \eqref{c2}, \eqref{c1L} and \eqref{c1_sq}.

\begin{proof}[Proof of \eqref{c2}] Let $s: X \longrightarrow TX$ be a smooth section
transverse to the zero set. The number of points at which
it vanishes (counted with a sign) is $x_2([X])$.
Note that a section $s\,:\, X \,\longrightarrow\, TX$
induces a section $\textnormal{ev}_{\delta_{\beta}}^*s$ of the pullback
$ \textnormal{ev}_{\delta_{\beta}}^*TX \longrightarrow \overline{\mathcal{M}}$.
The zero set of $\textnormal{ev}_{\delta_{\beta}}^*s$ is a degree $\beta$
curve through the points $p_1\, ,\cdots\, , p_{\delta_{\beta-1}}$ and one of the
zeros of $s$.
Let us denote one of the zeros of $s$ to
be $q$.
Let $C_q$ be
one of the curves through
$p_1\, ,\cdots\, , p_{\delta_{\beta-1}}$ and $q$.
If $q$ is a positive zero of $s$, then $C_q$ is a positive zero  of
$\textnormal{ev}_{\delta_{\beta}}^*s$; the reverse is true if $q$ is a negative
zero of $s$. Hence, the number of zeros of $\textnormal{ev}_{\delta_{\beta}}^*s$
counted with a sign is $x_2([X]) N_{\beta}$, which proves \eqref{c2}.
\end{proof}

\begin{proof}[Proof of \eqref{c1L}] It is easy to see that
\begin{align}
\langle \textnormal{ev}_{\delta_{\beta}}^*(x_1) \mathcal{H}, ~[\overline{\mathcal{M}}] \rangle &=
(\beta \cdot x_1)N_{\beta}, \nonumber \\
\langle \textnormal{ev}_{\delta_{\beta}}^*(x_1)^2,
~[\overline{\mathcal{M}}] \rangle & = (x_1 \cdot x_1) N_{\beta} \qquad \textnormal{and} \nonumber \\
\sum_{\substack{\beta_1+ \beta_2= \beta, \\ \beta_1, \beta_2 \neq 0}}\langle
\textnormal{ev}_{\delta_{\beta}}^*(x_1) \mathcal{B}_{\beta_1, \beta_2}, ~[\overline{\mathcal{M}}] \rangle &=
\sum_{\substack{\beta_1+ \beta_2= \beta, \\ \beta_1, \beta_2 \neq 0}}
\binom{\delta_{\beta}-1}{\delta_{\beta_1}} N_{\beta_1} N_{\beta_2} (\beta_1 \cdot \beta_2)(\beta_1 \cdot x_1)
\nonumber \\
& = \frac{1}{2}\sum_{\substack{\beta_1+ \beta_2= \beta, \\ \beta_1, \beta_2 \neq 0}}
\binom{\delta_{\beta}-1}{\delta_{\beta_1}} N_{\beta_1} N_{\beta_2} (\beta_1 \cdot \beta_2) (\beta \cdot x_1).
\label{itc_easy}
\end{align}
Equations \eqref{itc_easy} and \eqref{chern_class_divisor} together imply \eqref{c1L}.
\end{proof}

\begin{proof}[Proof of \eqref{c1_sq}]
First of all, we note that
\begin{align}
\langle c_1(\xii_{\delta_{\beta}}^*) \mathcal{H},
~[\overline{\mathcal{M}}] \rangle &\,=\, -2 N_{\beta}\, . \label{itc_obvious}
\end{align}
Indeed, this follows immediately from \eqref{chern_class_divisor}.

We will now show that
\begin{align}
\langle c_1(\xii_{\delta_{\beta}}^*) \mathcal{B}_{\beta_1, \beta_2},
~[\overline{\mathcal{M}}] \rangle  &\,=\,
-\binom{\delta_{\beta}-1}{\delta_{\beta_1}} N_{\beta_1} N_{\beta_2}
(\beta_1 \cdot \beta_2)\, .\label{c1_on_boundary}
\end{align}
For this, let
$\mathcal{B}_{\beta_1, \beta_2}(p_{i_1}, \cdots, p_{i_{\delta_{\beta_1}}})$
denote the component of $\mathcal{B}_{\beta_1, \beta_2}$
where the $\beta_1$ curve passes through $p_{i_1}, \cdots, p_{i_{\delta_{\beta_1}}}$.
Now define the map
\begin{align}
\pi\,:\, \mathcal{B}_{\beta_1, \beta_2}(p_{i_1}, \cdots, p_{i_{\delta_{\beta_1}}})
&\,\longrightarrow\,
\overline{\mathcal{M}}_{0, \delta_{\beta_1}+1}(X, \beta_1; p_{i_1}, \cdots, p_{i_{\delta_{\beta_1}}})
\label{proj_map_not_quite}
\end{align}
which is the projection onto the $\beta_1$ component.
This map $\pi$ is $N_{\beta_2} (\beta_1 \cdot \beta_2)$--to--one.
Let \[ \widetilde{\xii}_{i_1, \cdots, i_{\delta_{\beta_1}}} \,\longrightarrow\,
\overline{\mathcal{M}}_{0, \delta_{\beta_1}+1}(X, \beta_1; p_{i_1}, \cdots, p_{i_{\delta_{\beta_1}}})\]
be the universal tangent bundle line at the last marked point.
By \eqref{itc_obvious},
\begin{align}
\langle c_1(\widetilde{\xii}^*_{i_1, \cdots, i_{\delta_{\beta_1}}}),
~[\overline{\mathcal{M}}_{0, \delta_{\beta_1}+1}(X, \beta_1; p_{i_1}, \cdots, p_{i_{\delta_{\beta_1}}})]
\rangle &\,=\, -2 N_{\beta_1}\, . \label{beta_replace_beta1}
\end{align}
Note that we replaced $\beta$ by $\beta_1$ in \eqref{itc_obvious} to obtain
the above equation;
that is permitted since \eqref{itc_obvious} holds for \textit{all} $\beta$.

Let
\[\{y \in \mathcal{G} \} \]
denote the divisor
inside the space $\mathcal{B}_{\beta_1, \beta_2}$ that corresponds to the marked point
lying on a ghost bubble. More precisely, the elements of
$\{y \in \mathcal{G} \} $ comprise of maps form a wedge of three spheres into $X$
that is degree $\beta_1$ on the first component, degree $\beta_2$ on the third component
and constant on the middle component, with the marked point lying on the
middle component.
As stated in \cite{ionel_genus1} (equation $2.10$, Page $29$), we have the following equality of divisors,
\begin{align*}
c_1(\widetilde{\xii}^*_{i_1, \cdots, i_{\delta_{\beta_1}}})\Big|_{\mathcal{B}_{d_1, d_2}} &=
\pi^* c_1(\widetilde{\xii}^*_{i_1, \cdots, i_{\delta_{\beta_1}}}) + |\{y \in \mathcal{G}\}|.
\end{align*}
Hence,
$$
\langle c_1(\xii_{\delta_{\beta}}^*) \mathcal{B}_{\beta_1, \beta_2},
~[\overline{\mathcal{M}}] \rangle
$$
$$
=\, \sum_{(i_1, \cdots, i_{\delta_{\beta_1}}) \subset
\{1, 2, \cdots, \delta_{\beta}-1 \}} \langle \pi^*c_1(\widetilde{\xii}^*_{i_1, \cdots, i_{\delta_{\beta_1}}}),
~[\overline{\mathcal{M}}_{0, \delta_{\beta_1}+1}(X, \beta_1; p_{i_1}, \cdots, p_{i_{\delta_{\beta_1}}})]
\rangle + \vert\{ y_{\delta_{\beta}} \in \mathcal{G}\}\vert
$$
$$
= \, -2N_{\beta_1} \binom{\delta_{\beta}-1}{\delta_{\beta_1}} N_{\delta_{\beta_2}}
(\beta_1 \cdot \beta_2) + \binom{\delta_{\beta}-1}{\delta_{\beta_1}}
N_{\delta_{\beta_1}} N_{\delta_{\beta_2}} (\beta_1 \cdot \beta_2)
$$
\begin{equation}\label{c1_intersect_bdry}
\,=\, -\binom{\delta_{\beta}-1}{\delta_{\beta_1}} N_{\beta_1} N_{\beta_2}
(\beta_1 \cdot \beta_2),
\end{equation}
which proves \eqref{c1_on_boundary}.
Equations \eqref{c1_on_boundary}, \eqref{itc_easy}, \eqref{c1L}
and \eqref{chern_class_divisor} imply that
\begin{align*}
\langle c_1(\xii^*_{\delta_{\beta}})^2, ~[\overline{\mathcal{M}}] \rangle & =
-\frac{1}{(\beta \cdot x_1)^2}\sum_{
\substack{\beta_1+ \beta_2= \beta, \\ \beta_1, \beta_2 \neq 0} }
\binom{\delta_{\beta}-1}{\delta_{\beta_1}} N_{\beta_1} N_{\beta_2} (\beta_1 \cdot \beta_2)
\Big((\beta_1 \cdot x_1) (\beta_2 \cdot x_1) + (\beta_2 \cdot x_1)^2 \big)
\\
& = -\frac{1}{2}\sum_{
\substack{\beta_1+ \beta_2= \beta, \\ \beta_1, \beta_2 \neq 0} }
\binom{\delta_{\beta}-1}{\delta_{\beta_1}} N_{\beta_1} N_{\beta_2} (\beta_1 \cdot \beta_2).
\end{align*}
This completes the proof.
\end{proof}

\section{Degenerate contribution to the Euler class}
\label{degenerate_contrib}

We start this section by recapitulating a few facts about moduli spaces of curves on
del-Pezzo surfaces. As before, let $X$
be a del-Pezzo surface and $\beta \in H_2(X, \mathbb{Z})$ a given homology class.
Since $X$ is algebraic, it embeds inside $\mathbb{P}^{n}$ for some $n$; fix such
an embedding. A map
$u\,:\, \mathbb{P}^1 \,\longrightarrow\, X$ also determines a map from $\mathbb{P}^1$
to $\mathbb{P}^n$ obtained by composing with the inclusion map. Let us say that a
degree $\beta$ map into $X$ determines a degree $d$ map into $\mathbb{P}^n$.
Given $\beta$, this $d$ is uniquely determined. We note that two distinct $\beta$ can
determine the same degree $d$. This yields an inclusion
\begin{equation}\label{li}
\mathcal{M}_{0,k}^{*}(X, \beta) \,\subset\, \mathcal{M}_{0,k}^*(\mathbb{P}^n, d)\, .
\end{equation}
It is well known that $\mathcal{M}_{0,k}^*(\mathbb{P}^n, d)$ is a smooth complex
manifold of the expected dimension. It is also known that
$\mathcal{M}_{0,k}^{*}(X, \beta)$ is a smooth manifold of the expected dimension. It also follows from the results of
\cite{D_Testa}
that $\overline{\mathcal{M}}_{0,0}(X, \beta)$ is an irreducible variety of the
expected  dimension.

For any $u\,:\, \mathbb{P}^1 \,\longrightarrow\, X$ representing a point of
$\mathcal{M}_{0,k}^{*}(X, \beta)$, let $\widehat{u}\, :\, \mathbb{P}^1
\,\longrightarrow\,\mathbb{P}^n$ be its composition with the embedding of $X$ in
$\mathbb{P}^n$. We have $T_u \mathcal{M}_{0,k}^{*}(X, \beta)\,=\, H^0(\mathbb{P}^1,\,
u^* N_{X/\mathbb{P}^1})$ and
$T_{\widehat u} \mathcal{M}_{0,k}^*(\mathbb{P}^n, d)\,=\, H^0(\mathbb{P}^1,\,
\widehat{u}^* N_{\mathbb{P}^n/\mathbb{P}^1})$, where $N_{X/\mathbb{P}^1}$ and
$N_{\mathbb{P}^n/\mathbb{P}^1}$ are the normal bundles. Since the homomorphism
$$
H^0(\mathbb{P}^1,\, u^* N_{X/\mathbb{P}^1})\, \longrightarrow\, H^0(\mathbb{P}^1,\,
\widehat{u}^* N_{\mathbb{P}^n/\mathbb{P}^1})
$$
induced by the differential of the embedding $X\, \hookrightarrow\,
\mathbb{P}^n$ is injective, the inclusion map in \eqref{li} is an immersion.
Hence, we conclude that $\mathcal{M}_{0,k}^{*}(X, \beta)$ is a submanifold
of $\mathcal{M}_{0,k}^*(\mathbb{P}^n, d)$.

We are now ready to state our neighborhood construction. Before that let us recapitulate a standard
of notation. We will be denoting an element of $\mathbb{P}^n$ as
\[ [Z_0, Z_1, \cdots, Z_n],\]
where $Z_i$ are not all zero. The square bracket $[ ~]$ is to remind us that we are
looking at an equivalence class of $n+1$ tuples. In other words, if $\lambda$ is a non-zero
complex number, then
\[ [Z_0, Z_1, \cdots, Z_n] = [ \lambda Z_0, \lambda Z_1, \cdots, \lambda Z_n] \in \mathbb{P}^n.\]

Let us also explain one terminology we will be using frequently. Suppose $Y$ is a
$k$-dimensional submanifold of an $m$-dimensional manifold $X$. Let $p$ be a point
in $Y$ and let $(x_1(p), x_2(p), \cdots, x_m(p))$ be a coordinate chart for $X$ around the
point $p$. Since $Y$ is a submanifold of $X$ there exist $i_1$, $i_2, \cdots, i_k$ such that
$x_{i_1}(p), \cdots, x_{i_k}(p)$ determines a coordinate chart for $Y$. We will call
these coordinates $x_{i_1}(p), \cdots, x_{i_k}(p)$ the \textit{free} coordinates. What
this means is the following: suppose $(x_1^t(p), \cdots ,x_m^t(p))$ is a point that lies in $Y$ and
is close to $(x_1(p), x_2(p), \cdots, x_m(p))$. If we know the coordinates $x_{i_1}(t),
\cdots, x_{i_k}(t)$ then we can solve for the remaining coordinates in terms of the
free coordinates. We will be using this observation quite frequently henceforth.

\subsection{Neighborhood Construction}

Let $v_{\A}\,:\, \mathbb{P}^1 \,\longrightarrow\, X$ and
$v_{\B}\,:\, \mathbb{P}^1 \,\longrightarrow\, X$ be two holomorphic
curves of degree $\beta_1$ and $\beta_2$ respectively, such that
\[ v_{\mathrm{A}}([1,0]) \,=\, v_{\mathrm{B}}([0,1])\]
(we consider $\mathbb{P}^1$ as equivalence classes of points of ${\mathbb C}^2\setminus
\{0\}$). Furthermore, assume that $v_{\A}$ and $v_{\B}$ are not
multiply covered.
Let $\beta\,:= \, \beta_1+ \beta_2$.
We will describe a procedure to
construct a degree $\beta$ curve that lies near
the degree $\beta$ bubble map determined by $v_{\A}$ and $v_{\B}$.

Since $X$ is projective, it embeds inside $\mathbb{P}^{n}$ for some $n$.
Suppose $v_{\A}$ and $v_{\B}$ are explicitly given as
\begin{align*}
v_{\A}([X,Y]) &\,:=\, [\A^0(X,Y), \cdots, \A^n(X,Y)] \qquad \textnormal{and} \qquad v_{\B}([X,Y]) :=
[\B^0(X,Y), \cdots, \B^n(X,Y)]
\end{align*}
where $\A^{\mu}(X,Y)$ and $\B^{\nu}(X,Y)$ are homogeneous polynomials of degrees $d_1$ and $d_2$.
Let $\A^{\mu}_{i}$ and $\B^{\nu}_{j}$ be the coefficient of $X^i$ in
$\A^{\mu}(X,Y)$ and $\B^{\nu}(X,Y)$ respectively.
By composing with  appropriate M\"obius transformations
(that fix $[1,0]\,\in\, \mathbb{P}^1$ and $[0,1]\,\in\, \mathbb{P}^1$ respectively), we
can set three of the $\A^{\mu}_{i} $ and three of the $\B^{\nu}_{j}$ to be some
specific constant. Now define
\begin{align*}
\A^{\mu}_{t}(X,Y) &\,:=\, \sum_{i=0}^{d_1} (\A^{\mu}_i + t^{\mu}_i) X^i Y^{d_1-i} \qquad \textnormal{and}
\qquad \B^{\nu}_s(X,Y) \,:= \,\sum_{j=0}^{d_2} (\B^{\nu}_j + s^{\nu}_j) X^j Y^{d_2-j}\, ,
\end{align*}
where $t^{\mu}_i$ and $s^{\nu}_j$ are small complex numbers. 
After composing with an automorphism of $\mathbb{P}^n$ if necessary, we may 
assume that $\A^{\mu}(0,1), \B^{\mu} (1,0) \neq 0 \ \forall \ \mu$.
Set the three of the
$t^{\mu}_i$ and $s^{\nu}_j$ to be zero
(the ones that correspond to the six coefficients
that were fixed).
Next, given an $\varepsilon$ that is small, define
\begin{align*}
\R^{\mu}_{\varepsilon, t,s} (X,Y) &\,:=\, \A^{\mu}_t (X,Y) Y^{d_2} +
\frac{\A^{\mu}_t(1,0)}{\B_s^{\mu}(0,1)} \B^{\mu}_s(\varepsilon^2 X, Y) X^{d_1}
-\A_t^{\mu}(1,0) X^{d_1} Y^{d_2}\, , \\
\J^{\mu}_{\varepsilon, t,s} (X,Y) &\,:=\, \frac{\B_s^{\mu}(0,1)\R^{\mu}_{\varepsilon, t,s}
(X, \varepsilon^2 Y)}{\A_t ^{\mu} (1,0)\varepsilon^{2 d_2}}\, .
\end{align*}

If the polynomials
$\R^{\mu}_{\varepsilon, t,s}$ induce a well-defined map into $\mathbb{P}^n$, then denote
$u_{\varepsilon, t, s}^{\R}\,:\,\mathbb{P}^1\,\longrightarrow\, \mathbb{P}^n$
to be the
degree $d\,:=\, d_1+d_2$ map defined by these polynomials.
Note that for generic $(\varepsilon, t,s)$, the polynomials $\R^{\mu}_{\varepsilon, t,s}$
do induce a well-defined map (i.e., all the coordinates are not zero).
Next, we observe that $u_{\varepsilon, t, s}^{\R}$ does not necessarily map into $X$.
In order for the curve to lie in $X$, only $\delta_{\beta_1}+1$ of the
$t^{\mu}_i$ and $\delta_{\beta_2}+1$ of the $s^{\nu}_j$ are free. Denote the
free variables by $t^{\mu}_i$ and  $s^{\nu}_j$ and the remaining ones by
$\widehat{t}^{\mu}_i$ and $\widehat{s}^{\nu}_j$ respectively.
Let us denote the corresponding
polynomials to be $\widehat{\R}$ and $\widehat{\J}$ respectively, and
let
$$u_{\varepsilon, t, s}^{\widehat{\R}}\,:\,\mathbb{P}^1
\,\longrightarrow\, \mathbb{P}^n$$
be the corresponding degree $d$ map.
By definition, now $u_{\varepsilon, t, s}^{\widehat{\R}}$ lies in $X$.

Next, let
\begin{align*}
\{p_i\,:=\, [a^0_i, a^1_i,\cdots, a^n_i]\}_{i=1}^{\delta_{\beta_1}}
\bigcup  \{q_j:= [b_j^0, b_1^2, \cdots,  b_j^n]\}_{j=1}^{\delta_{\beta_2}}
\end{align*}
be a collection of $\delta_{\beta}-1$ generic points in $X$.
Furthermore, let
\begin{align*}
\{\mkdpt_i:= [x^a_i, y^a_i]\}_{i=1}^{\delta_{\beta_1}}
\bigcup \{\mkdptt_j:= [x^b_j, y^b_j]\}_{j=1}^{\delta_{\beta_2}}
\end{align*}
be a  collection of $\delta_{\beta}-1$ points in $\mathbb{P}^1$
such that
\begin{align*}
v_{\A}([x^a_i, y^a_i]) &\,=\, [a^0_i, a^1_i, \cdots, a^n_i] \qquad \forall ~~i
\,=\, 1, \cdots ,\delta_{\beta_1} \qquad \textnormal{and}\\
v_{\B}([x^b_j, y^b_j]) &\,=\, [b^0_j, b^1_j, \cdots, b^n_j] \qquad \forall ~~j
\,=\, 1,\cdots ,\delta_{\beta_2}.
\end{align*}
This gives us a set of $2\delta_{\beta}$ equations
\begin{align}
\label{equation_explicit_transverse}
\A^{\mu}(x^a_i, y^a_i) a^0_i - a^{\mu}_i \A^0(x^a_i, y^a_i)&=0,
\qquad \forall  ~~\mu = \mu_1, ~\mu_2 ~~\textnormal{and} ~~i
\,=\, 1,\cdots , \delta_{\beta_1}, \nonumber \\
\B^{\nu}(x^b_j, y^b_j) b^0_j - b^{\nu}_j \B^0(x^b_j, y^b_j)&=0,
\qquad \forall  ~~\nu = \nu_1, ~\nu_2 ~~\textnormal{and} ~~j = 1,
\cdots , \delta_{\beta_2}, \nonumber \\
\A^{\mu}(1,0)\B^0(0,1)-\B^{\mu}(0,1)\A^0(1,0) &=0, \qquad \forall  ~~\mu = \widetilde{\mu}_1,
~\widetilde{\mu}_2,
\end{align}
for some $\mu_i, \nu_i$ and $\widetilde{\mu}_i \,\in\, \{0,1, 2, \cdots, n\}.$
Without loss of generality, we set
$y^a_i \,=\,1$ for all $i \,=\, 1, \cdots ,\delta_{\beta_1}$ and
set $x^b_j \,=\,1$ for all $j\,=\,1, \cdots ,\delta_{\beta_2}$.
Since $\delta_{\beta_1}+1$ of the
$\A^{\mu}_i$ are free and $\delta_{\beta_2}+1$ of the
$\B^{\nu}_j$ are free, it follows that
the number of the free unknowns $\A^{\mu}_i$, $\B^{\nu}_j$, $x^a_i$ and $y^b_j$
is $2\delta_{\beta}$. Note that the evaluation map
\begin{align*}
\textnormal{ev}\,:\, \mathcal{M}_{0, \delta_{\beta_1}}^*(X, \beta_1) \times
\mathcal{M}_{0, \delta_{\beta_2}}^*(X, \beta_2) &
\,\longrightarrow\, X^{\delta_{\beta_1}-1} \times X^{\delta_{\beta_2}-1} \times X^2.
\end{align*}
is transverse to $(p_1, \cdots, p_{\delta_{\beta_1}-1}, q_1, \cdots,
q_{\delta_{\beta_2}-1} ) \times \Delta_{X}$ if the points $(p_1, \cdots,
p_{\delta_{\beta_1}-1}, q_1, \cdots, q_{\delta_{\beta_2}-1} )$ are generic. In other
words, for a generic choice of these $\delta_{\beta}-1$ points, the equations
\eqref{equation_explicit_transverse} simultaneously vanish transversely at the given value.

\begin{lmm}
\label{ift_gluing}
Let $\varepsilon$ be a given small nonzero complex number. Then there exists a unique
triple
$(t(\varepsilon), s(\varepsilon), \theta(\varepsilon))$ that is small and solves the
following set of equations:
\begin{align}
a^0_i \widehat{\R}^{\mu}_{\varepsilon, t,s}(x^a_i+ \theta^a_i, 1) -
a^{\mu}_i \widehat{\R}^{0}_{\varepsilon, t,s}(x^a_i+ \theta^a_i, 1)  &\,=\, 0,
\qquad \forall  ~~\mu = \mu_1, ~\mu_2 ~~\textnormal{and} ~~i \,=\, 1,\cdots , \delta_{\beta_1},
\nonumber \\
b^0_i \widehat{\J}^{\nu}_{\varepsilon, t,s}(1, y^b_j+ \theta^b_j) -
b^{\nu}_i \widehat{\J}^{0}_{\varepsilon, t,s}(1, y^b_j)  &\,=\, 0,
\qquad \forall  ~~\nu = \nu_1, ~\nu_2 ~~\textnormal{and} ~~j \,=\, 1,\cdots ,\delta_{\beta_2},
\nonumber \\
\A^{\mu}_t(1,0)\B^0_s(0,1)-\B^{\mu}_s(0,1)\A^1_t(1,0) &\,=\,0, \qquad \forall  ~~\mu = \widetilde{\mu}_1,
~\widetilde{\mu}_2. \label{equation_perturbed_transverse}
\end{align}
\end{lmm}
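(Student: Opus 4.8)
The plan is to prove Lemma \ref{ift_gluing} by a direct application of the holomorphic Implicit Function Theorem, the invertibility hypothesis being supplied by the transversality statement established just above, namely the transverse vanishing of the system \eqref{equation_explicit_transverse}. First I would package the left-hand sides of \eqref{equation_perturbed_transverse} into a single map $F(\varepsilon, t, s, \theta) \in \mathbb{C}^{2\delta_{\beta}}$, listing in order the $2\delta_{\beta_1}$ entries of the first type, the $2\delta_{\beta_2}$ entries of the second type, and the two node-matching entries. As recorded in the neighborhood construction, the number of free variables among $(t,s,\theta)$ is $(\delta_{\beta_1}+1)+(\delta_{\beta_2}+1)+(\delta_{\beta_1}+\delta_{\beta_2})=2\delta_{\beta}$, using $\delta_{\beta}=\delta_{\beta_1}+\delta_{\beta_2}+1$, so $F$ sends a neighborhood of the origin in $\mathbb{C}\times\mathbb{C}^{2\delta_{\beta}}$ into $\mathbb{C}^{2\delta_{\beta}}$ and the system is square. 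The first point to verify is that $F$ is holomorphic near $(\varepsilon,t,s,\theta)=(0,0,0,0)$. The only apparent danger is the factor $\varepsilon^{2d_2}$ in the denominator defining $\J^{\mu}_{\varepsilon,t,s}$; however the substitution $Y\mapsto\varepsilon^2 Y$ in $\R^{\mu}_{\varepsilon,t,s}$ produces a compensating factor $\varepsilon^{2d_2}$, so that $\J^{\mu}_{\varepsilon,t,s}$ (and likewise $\R^{\mu}_{\varepsilon,t,s}$) is in fact polynomial in $\varepsilon^2$. After the automorphism of $\mathbb{P}^n$ arranged in the construction, the remaining denominators $\A^{\mu}_t(1,0)$ and $\B^{\mu}_s(0,1)$ are nonzero at $t=s=0$, hence in a neighborhood, so $F$ is holomorphic there.

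Next I would check that the origin solves the system at $\varepsilon=0$. Setting $\varepsilon=0$ one computes the specializations $\R^{\mu}_{0,t,s}(X,Y)=\A^{\mu}_t(X,Y)Y^{d_2}$ and $\J^{\mu}_{0,t,s}(X,Y)=\B^{\mu}_s(X,Y)X^{d_1}$, so that at $\varepsilon=0$ the system \eqref{equation_perturbed_transverse} becomes exactly the incidence-and-matching system \eqref{equation_explicit_transverse}, now read as a function of the perturbation variables: $t$ perturbs the coefficients $\A^{\mu}_i$, $s$ perturbs the $\B^{\nu}_j$, and $\theta^a_i,\theta^b_j$ perturb the marked-point positions $x^a_i,y^b_j$. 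Since $(v_{\A},v_{\B})$ was chosen to satisfy \eqref{equation_explicit_transverse}, we obtain $F(0,0,0,0)=0$, i.e.\ $(t,s,\theta)=0$ solves the system at $\varepsilon=0$.

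The heart of the argument is the invertibility of the partial derivative $D_{(t,s,\theta)}F$ at $(0,0,0,0)$. Because the $\varepsilon=0$ specialization of $F$ is precisely \eqref{equation_explicit_transverse} expressed in the variables $(t,s,\theta)$, this partial derivative is exactly the Jacobian of \eqref{equation_explicit_transverse} at the base configuration. The transversality established above, equivalently the transverse vanishing of the $2\delta_{\beta}$ equations \eqref{equation_explicit_transverse} at the given value, says precisely that this $2\delta_{\beta}\times 2\delta_{\beta}$ Jacobian is an isomorphism; here the transversality of the evaluation map $\textnormal{ev}$ to $(p_1,\cdots,q_{\delta_{\beta_2}-1})\times\Delta_X$ translates, under the identification of tangent vectors with infinitesimal coefficient and position perturbations, into the nondegeneracy of this Jacobian, the block structure at $\varepsilon=0$ (an $\A$-block in $(t,\theta^a)$, a $\B$-block in $(s,\theta^b)$, and the node-matching block in $(t,s)$) making the identification transparent. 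With $F(0,0,0,0)=0$ and $D_{(t,s,\theta)}F(0,0,0,0)$ invertible, the holomorphic Implicit Function Theorem yields a neighborhood of $\varepsilon=0$ and a unique small holomorphic map $\varepsilon\mapsto(t(\varepsilon),s(\varepsilon),\theta(\varepsilon))$ with $(t(0),s(0),\theta(0))=0$ solving \eqref{equation_perturbed_transverse}, which is the assertion of the lemma; in particular the solution depends holomorphically on $\varepsilon$, a fact to be used in Section \ref{degenerate_contrib}.

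I expect the main obstacle to be the node-matching step: one must ensure that at $\varepsilon=0$ the glued equations genuinely specialize to \eqref{equation_explicit_transverse}, and that the partial Jacobian in $(t,s,\theta)$ really coincides with the Jacobian controlled by the transversality hypothesis rather than differing by terms of positive order in $\varepsilon$. Verifying the specializations $\R^{\mu}_{0,t,s}=\A^{\mu}_t\,Y^{d_2}$ and $\J^{\mu}_{0,t,s}=\B^{\mu}_s\,X^{d_1}$ and checking that the third pair of equations reduces to the node-matching equations of \eqref{equation_explicit_transverse} is the routine but essential computation underlying this identification.
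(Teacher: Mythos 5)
Your proposal is correct and follows essentially the same route as the paper's (much terser) proof: the paper simply observes that at $\varepsilon=0$ the linearization of \eqref{equation_perturbed_transverse} in $(t,s,\theta)$ agrees with that of the transversally-vanishing system \eqref{equation_explicit_transverse} and invokes the implicit function theorem. You have merely filled in the details the paper leaves implicit --- the square count $2\delta_{\beta}$, the specializations $\R^{\mu}_{0,t,s}=\A^{\mu}_t Y^{d_2}$ and $\J^{\mu}_{0,t,s}=\B^{\mu}_s X^{d_1}$, and the holomorphy of $\J$ despite the $\varepsilon^{2d_2}$ denominator --- all of which check out.
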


\begin{proof}
The equation \eqref{equation_explicit_transverse}
has a transverse solution while equation \eqref{equation_perturbed_transverse}
has approximately the same linearization as \eqref{equation_explicit_transverse}
(i.e., to zero-th order
in $\varepsilon$, the linearizations are the same).  The lemma now follows from the
implicit function theorem.
\end{proof}

\begin{cor}\label{injective_transverse_intersection}
Given a nonzero $\varepsilon$ that is small, let $t(\varepsilon)$,
$s(\varepsilon)$ and $\theta(\varepsilon)$ be the unique solution as given by
Lemma \ref{ift_gluing}. If $\varepsilon_1\,\neq\, \varepsilon_2$ then the curves
$u_{\varepsilon_i, t(\varepsilon_1), s(\varepsilon_i)}^{\widehat{\mathcal{R}}}$
($i\,=\,1\, , 2$)
both pass through $p_1, \cdots, p_{\delta_{\beta_1}-1}$, $q_1, \cdots,
q_{\delta_{\beta_2}-1}$
and they intersect transversally. In particular, the two curves are distinct.
\end{cor}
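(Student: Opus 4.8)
The plan is to prove Corollary \ref{injective_transverse_intersection} in two parts. First, I would verify that each glued curve passes through the required points; this is essentially a restatement of the gluing lemma. By construction, the polynomials $\widehat{\R}^{\mu}_{\varepsilon, t,s}$ and $\widehat{\J}^{\nu}_{\varepsilon,s,t}$ were defined so that the free parameters $t,s$ adjust the $\A$- and $\B$-coefficients while keeping the curve inside $X$, and Lemma \ref{ift_gluing} produces the unique small triple $(t(\varepsilon), s(\varepsilon), \theta(\varepsilon))$ for which equations \eqref{equation_perturbed_transverse} hold. Those equations are precisely the incidence conditions $u^{\widehat{\R}}_{\varepsilon,t,s}(\mkdpt_i + \theta^a_i) = p_i$ for $i = 1, \cdots, \delta_{\beta_1}-1$ and the analogous conditions on the $q_j$; the shift $\theta(\varepsilon)$ records how the preimages of the fixed points $p_i, q_j$ move as we deform. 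Hence each $u^{\widehat{\R}}_{\varepsilon_i, t(\varepsilon_i), s(\varepsilon_i)}$ automatically passes through $p_1, \cdots, p_{\delta_{\beta_1}-1}, q_1, \cdots, q_{\delta_{\beta_2}-1}$ by the very equations it solves.

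Next I would address the transversality and distinctness claim, which is the heart of the corollary. The idea is that as $\varepsilon \to 0$, the curve $u^{\widehat{\R}}_{\varepsilon, t(\varepsilon), s(\varepsilon)}$ degenerates to the nodal configuration $v_{\A} \cup v_{\B}$, with the parameter $\varepsilon$ controlling the ``smoothing'' of the node at the point $v_{\A}([1,0]) = v_{\B}([0,1])$. I would examine the behaviour of the glued curve in a neighbourhood of the node. To leading order in $\varepsilon$, the two local branches of $u^{\widehat{\R}}_{\varepsilon, t,s}$ near the node are governed by the reparametrized polynomials $\R^{\mu}_{\varepsilon, t, s}$ and $\J^{\mu}_{\varepsilon, t, s}$, in which $\varepsilon^2$ enters as a scaling of the $X$ (respectively $Y$) variable. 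The key computation is to extract, at the node, how the image point and the tangent directions of the glued curve depend on $\varepsilon$: a direct expansion shows that the local picture is a standard smoothing $xy = \varepsilon^2$ (up to higher-order terms), so the smoothing parameter is genuinely $\varepsilon^2$ and different values $\varepsilon_1 \neq \varepsilon_2$ produce genuinely different smoothings of the same nodal curve.

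From this local analysis I would deduce distinctness and transversality. Distinctness is the easier consequence: if $u^{\widehat{\R}}_{\varepsilon_1, \cdots}$ and $u^{\widehat{\R}}_{\varepsilon_2, \cdots}$ coincided as maps into $X$, then near the node they would induce the same smoothing, forcing $\varepsilon_1^2 = \varepsilon_2^2$; combined with the uniqueness in Lemma \ref{ift_gluing} and a sign-tracking argument (the relation between $\varepsilon$ and $-\varepsilon$ can be controlled by how the coefficients $\A^{\mu}_t(1,0)/\B^{\mu}_s(0,1)$ enter $\R^{\mu}_{\varepsilon,t,s}$), this contradicts $\varepsilon_1 \neq \varepsilon_2$. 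For transversality, I would compute the intersection at the common points. The two curves both pass through each $p_i$ and $q_j$; at such a fixed point they meet, and I must check the intersection is transverse, i.e. their tangent lines there are distinct. Since the two curves differ at first order in $\varepsilon_1 - \varepsilon_2$ away from the node (the incidence conditions pin them to the same points but the derivative $\partial_\varepsilon u^{\widehat{\R}}$ is a nonzero section of the normal bundle, by the transversality of \eqref{equation_explicit_transverse} established before the lemma), a generic-position argument gives distinct tangent directions at the shared points.

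The main obstacle I expect is the local node analysis: one must carefully expand $\R^{\mu}_{\varepsilon, t, s}$ and $\J^{\mu}_{\varepsilon, t, s}$ to isolate the leading $\varepsilon$-dependence and confirm that the smoothing parameter is $\varepsilon^2$ rather than being absorbed into the free parameters $t(\varepsilon), s(\varepsilon)$. The subtlety is that $t(\varepsilon)$ and $s(\varepsilon)$ themselves depend on $\varepsilon$ through Lemma \ref{ift_gluing}, so I would need to show these dependences are higher order (which follows because the linearization of \eqref{equation_perturbed_transverse} agrees with that of \eqref{equation_explicit_transverse} to zeroth order in $\varepsilon$, so $t(\varepsilon), s(\varepsilon) = O(\varepsilon^2)$ or smaller), and hence do not interfere with the genuine $\varepsilon^2$-smoothing of the node. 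Once this separation of scales is in place, distinctness and transversality follow from the transversality of the unperturbed incidence equations \eqref{equation_explicit_transverse} noted just before the lemma.
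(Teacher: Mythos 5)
Your first paragraph (the incidence conditions) is fine, and it is in fact all the paper itself offers: the published proof of this corollary is the single line ``Follows immediately from Lemma \ref{ift_gluing}'', the point being that the equations \eqref{equation_perturbed_transverse} solved by $(t(\varepsilon), s(\varepsilon), \theta(\varepsilon))$ are precisely the conditions that $u^{\widehat{\R}}_{\varepsilon, t(\varepsilon), s(\varepsilon)}$ pass through the prescribed points. Up to there you and the paper agree.

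The remainder of your argument has two genuine gaps. First, the transversality of the intersection at the shared points --- which is the real content of the corollary, since distinctness is \emph{deduced} from it --- is never actually proved: ``a generic-position argument gives distinct tangent directions at the shared points'' is an assertion, not an argument. The two curves are not independent generic members of some family; they are both manufactured from the same data $(v_{\A}, v_{\B}, p_i, q_j)$ by the same implicit-function-theorem solution, so you must either show that the tangent direction of $u^{\widehat{\R}}_{\varepsilon}$ at the preimage $x^a_i + \theta^a_i(\varepsilon)$ of $p_i$ varies nontrivially with $\varepsilon$ (a derivative computation you never set up), or run a genericity-of-the-points argument uniformly over the whole one-parameter family. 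Second, your proposed ``sign-tracking'' resolution of the case $\varepsilon_2 = -\varepsilon_1$ cannot work: every ingredient of the construction --- $\R^{\mu}_{\varepsilon,t,s}$, $\J^{\mu}_{\varepsilon,t,s}$, hence the system \eqref{equation_perturbed_transverse} and, by the uniqueness in Lemma \ref{ift_gluing}, the solution $(t(\varepsilon), s(\varepsilon), \theta(\varepsilon))$ itself --- depends on $\varepsilon$ only through $\varepsilon^2$, and the quantities $\A^{\mu}_t(1,0)/\B^{\mu}_s(0,1)$ you propose to track contain no $\varepsilon$ at all. Consequently $u^{\widehat{\R}}_{\varepsilon}$ and $u^{\widehat{\R}}_{-\varepsilon}$ are the \emph{same} map, and no local analysis near the node can separate them; the statement must be read with $\varepsilon$ identified with the smoothing parameter (i.e.\ under the hypothesis $\varepsilon_1^2 \neq \varepsilon_2^2$), which your write-up should state explicitly rather than paper over. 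Your identification of the local model $xy = \varepsilon^2$ and the observation that $t(\varepsilon), s(\varepsilon)$ are higher order are correct and would carry the distinctness claim for $\varepsilon_1^2 \neq \varepsilon_2^2$, but as written the two nontrivial assertions of the corollary remain unproved.
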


\begin{proof}
Follows immediately from Lemma \ref{ift_gluing}.
\end{proof}

\begin{rem}
Note that if the $\delta_{\beta}-1$ points are generic, the map
$u_{\varepsilon, t(\varepsilon), s(\varepsilon)}^{\widehat{\mathcal{R}}}$ is
well-defined and not multiply covered.
\end{rem}

\begin{lmm}
\label{gluing_surjective}
Given a nonzero $\varepsilon$ that is small, let $t(\varepsilon)$,
$s(\varepsilon)$ and $\theta(\varepsilon)$ be the
unique solution as given by Lemma \ref{ift_gluing},
and let $\varphi_{\varepsilon}$ be the M\"obius transformation
given by
\begin{align*}
\varphi_{\varepsilon}([X,Y]) \, :=\, [X, \varepsilon Y]\, .
\end{align*}
Let $m$ be some fixed complex number different from $0$ and let
$\fmkdpt_z\,:=\, [1, m+z]\,\in\, \mathbb{P}^1$.\footnote{The reader can set $m:=1$, but it is
instructive to point out that for our arguments to hold
it doesn't matter what $m$ is as long as it is not $0$. Basically we want the point $\fmkdpt_z$ to be
away from $[1,0]$ and $[0,1]$.}
Let $v$ be a bubble map with $\delta_{\beta}$ marked points that is
defined as follows: it is determined by the maps
$v_{\A}$ and $v_{\B}$; the first
$\delta_{\beta_1}$ marked points lie on the $\A$-bubble and are required to pass through
the points $p_{1}$ to $p_{\delta_{\beta_1}}$;
the next $\delta_{\beta_2}$
marked points lie on the $\B$-bubble and are required to pass
through $q_1$ to $q_{\delta_{\beta_2}}$
and the last marked point $\fmkdpt$ (which is free) lies on a ghost bubble connecting
the $\A$-bubble and the $\B$-bubble.
Let $\mathrm{B}_{\mathrm{r}}(0)$ be an open ball of radius $\mathrm{r}$
around the origin in $\mathbb{C}$. Then, the map
\begin{align*}
\Phi & :(\mathrm{B}_{\mathrm{r}}(0)-0) \times \mathrm{B}_{\mathrm{r}}(0)  \longrightarrow\,
\mathcal{M}_{0, \delta_{\beta}}^*
(X, \beta; p_1, \cdots, p_{\delta_{\beta_1}}, q_1, \cdots,
q_{\delta_{\beta_2}})
\end{align*}
given by
\begin{align*}
\Phi(\varepsilon, z)~ :=
\end{align*}
\begin{align*}
[u^{\widehat{\R}}_{\varepsilon, t(\varepsilon), s(\varepsilon) }
\circ \varphi_{\varepsilon}; ~\varphi_{\varepsilon}^{-1}\circ\mkdpt_1(\theta(\varepsilon)), \cdots,
\varphi_{\varepsilon}^{-1}\circ \mkdpt_{\delta_{\beta_1}-1}(\theta(\varepsilon));
\varphi_{\varepsilon}^{-1}\circ \mkdptt_1(\theta(\varepsilon)), \cdots, \varphi_{\varepsilon}^{-1}\circ
\mkdptt_{\delta_{\beta_2}-1}(\theta(\varepsilon)); \fmkdpt_z]
\end{align*}
is a bijective map onto an open neighborhood of $v$.
\end{lmm}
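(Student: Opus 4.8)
The plan is to exhibit $\Phi$ as a holomorphic injection onto an open set and then supply the surjectivity by a Gromov--compactness exhaustion, after checking that the source and target have the same dimension. The first thing I would record is this dimension bookkeeping. Since $\dim_{\mathbb C}\mathcal{M}^*_{0,\delta_\beta}(X,\beta) = \langle x_1,\beta\rangle + \delta_\beta - 1 = 2\delta_\beta$ and the $\delta_{\beta_1}+\delta_{\beta_2} = \delta_\beta - 1$ incidence conditions each cut the complex dimension by $\dim_{\mathbb C}X = 2$, the target $\mathcal{M}^*_{0,\delta_\beta}(X,\beta; p_1,\cdots,q_{\delta_{\beta_2}})$ is a complex manifold of dimension $2$, matching the complex dimension of the source $(\mathrm{B}_{\mathrm r}(0)-0)\times \mathrm{B}_{\mathrm r}(0)$. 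This equality of dimensions is what makes the argument below go through.

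Well-definedness and holomorphicity come essentially for free from the preceding results. For every small $\varepsilon\neq 0$, Lemma \ref{ift_gluing} produces the unique small triple $(t(\varepsilon),s(\varepsilon),\theta(\varepsilon))$ solving \eqref{equation_perturbed_transverse}, and because the equations and all the defining data are holomorphic, the holomorphic implicit function theorem shows this triple depends holomorphically on $\varepsilon$; the Remark following Corollary \ref{injective_transverse_intersection} guarantees that $u^{\widehat{\R}}_{\varepsilon,t(\varepsilon),s(\varepsilon)}$ is a genuine, non--multiply--covered degree $\beta$ map into $X$ through $p_1,\cdots,p_{\delta_{\beta_1}},q_1,\cdots,q_{\delta_{\beta_2}}$. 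Hence each $\Phi(\varepsilon,z)$ is a well--defined point of $\mathcal{M}^*$ and $\Phi$ is holomorphic in $(\varepsilon,z)$. I would then extend $\Phi$ across $\varepsilon=0$ by setting $\overline{\Phi}(0,z)$ to be the bubble map determined by $v_{\A}$ and $v_{\B}$ with the last marked point $\fmkdpt_z$ on the ghost bubble, so that $\overline{\Phi}(0,z_0)=v$ for the value $z_0$ with $\fmkdpt_{z_0}=\fmkdpt$. The content of this extension is continuity in the stable--map topology, i.e. that the rescaled glued curves $u^{\widehat{\R}}_{\varepsilon,t(\varepsilon),s(\varepsilon)}\circ\varphi_\varepsilon$ Gromov--converge to this bubble map as $\varepsilon\to 0$. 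I would read this off the explicit formulas: the factors $Y^{d_2}$ and $X^{d_1}$ force the limit to reproduce $v_{\A}$ and $v_{\B}$ on the two components, the substitutions $\varepsilon^2 X$, $\varepsilon^2 Y$ pinch the neck into a node, and $\varphi_\varepsilon([X,Y])=[X,\varepsilon Y]$ together with $\theta(\varepsilon)\to 0$ carries the marked points to their limiting positions on the bubble tree. This limit computation is one of the two technical cruxes.

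For injectivity I would treat the two coordinate directions separately. If $\varepsilon_1\neq\varepsilon_2$, Corollary \ref{injective_transverse_intersection} shows the underlying curves are already distinct, so $\overline{\Phi}(\varepsilon_1,z_1)\neq\overline{\Phi}(\varepsilon_2,z_2)$ for all $z_1,z_2$. For fixed $\varepsilon$, varying $z$ leaves the underlying map and the first $\delta_\beta-1$ marked points unchanged and only moves $\fmkdpt_z$; since the map is not multiply covered it is birational onto its image, so any $\varphi\in\mathrm{PSL}(2,\mathbb C)$ with $u\circ\varphi=u$ is the identity, whence distinct $z$ give genuinely distinct points of the moduli space (and at $\varepsilon=0$ distinct $z$ place $\fmkdpt_z$ at distinct points of the ghost bubble). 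Thus $\overline{\Phi}$ is injective. Restricting to $\varepsilon\neq 0$, $\Phi$ is an injective holomorphic map between complex $2$--manifolds, hence a biholomorphism onto an open subset $U\subset\mathcal{M}^*$ (equivalently, by invariance of domain applied to the underlying real $4$--manifolds).

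It then remains to show that $U$ is an open neighborhood of $v$, i.e. that $\Phi$ is \emph{surjective} onto the genuine curves near $v$. This I would obtain by exhaustion: given any sequence $w_k\in\mathcal{M}^*$ with $w_k\to v$, Gromov compactness forces the domains to develop a single node over the point $v_{\A}([1,0])=v_{\B}([0,1])$ with a ghost bubble carrying the last marked point, so each $w_k$ is a small smoothing of the bubble map; the transversality of the incidence equations \eqref{equation_explicit_transverse} together with Lemma \ref{ift_gluing} then reconstructs a unique small $(\varepsilon_k,t_k,s_k,\theta_k,z_k)$ with $w_k=\Phi(\varepsilon_k,z_k)$, and continuity of $\overline{\Phi}$ forces $(\varepsilon_k,z_k)\to(0,z_0)$, so eventually $w_k\in U$. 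Hence $U$ is exactly the set of genuine curves in a neighborhood of $v$, and $\Phi$ is a bijection onto it, as claimed. I expect this exhaustion step, together with the $\varepsilon\to 0$ Gromov convergence in the extension, to be the principal obstacles; the remaining ingredients are the dimension count, the cited lemmas, and the openness coming from injectivity of a holomorphic map in equal dimensions.
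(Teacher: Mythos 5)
Your setup (well--definedness, the continuous extension across $\varepsilon=0$ in the Gromov topology, and the two--step injectivity argument) matches the paper's, and the observation that an injective continuous map between spaces of equal dimension is open is also the mechanism the paper ultimately uses. The genuine gap is in your surjectivity step. Openness of the image $U=\Phi\bigl((\mathrm{B}_{\mathrm r}(0)-0)\times\mathrm{B}_{\mathrm r}(0)\bigr)$ does not by itself imply that $U\cup\{[v]\}$ contains a neighborhood of $[v]$ in the moduli space: a priori the boundary point $[v]$ could lie in the closure of several branches of $\mathcal{M}^*$, only one of which is swept out by the explicit polynomial gluing. Your exhaustion argument is meant to rule this out, but the key sentence --- that for a sequence $w_k\to v$ ``Lemma \ref{ift_gluing} then reconstructs a unique small $(\varepsilon_k,t_k,s_k,\theta_k,z_k)$ with $w_k=\Phi(\varepsilon_k,z_k)$'' --- is an assertion of exactly the surjectivity you are trying to prove. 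Lemma \ref{ift_gluing} runs in the opposite direction: given $\varepsilon$ it produces a curve through the constraints; it does not say that every genuine curve Gromov--close to $v$ and through the constraints arises from the polynomials $\widehat{\R}^{\mu}_{\varepsilon,t,s}$ for some $(\varepsilon,t,s)$.

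The missing input is a statement about the local structure of the compactified moduli space at the nodal curve. The paper supplies it by forgetting the last marked point and invoking the Gluing Theorem (Theorem $10.1.2$ of \cite{McSa}, together with the discussion on pp.\ $384$--$385$): a neighborhood of $\pi([v])$ in $\overline{\mathcal{M}}_{0,\delta_\beta-1}(X,\beta;p_1,\cdots,q_{\delta_{\beta_2}})$ is the continuous image of a ball in $\mathbb{R}^2$, hence consists of a \emph{single} branch. Only then does invariance of domain, applied to the continuous injection $\Psi(\varepsilon)=\pi\circ\Phi(\varepsilon,z)$ extended over $\varepsilon=0$, force $\Psi$ to be onto a neighborhood of $\pi([v])$; surjectivity of $\Phi$ then follows by tracking the last marked point. (This is also where the non--multiply--covered hypothesis on $v_{\A}$ and $v_{\B}$ enters: as Remark \ref{mulitply_covered_curves} notes, for multiply covered components the normal neighborhood can have several branches, and your exhaustion would genuinely fail there.) To repair your proof you must either cite such a gluing/uniqueness theorem or prove directly that every map Gromov--close to the bubble map is, up to reparametrization, given by polynomials of the form $\widehat{\R}^{\mu}_{\varepsilon,t,s}$; neither is automatic.
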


\begin{proof}  First let us show that
$\Phi(\varepsilon, z)$ extends continuously to $\{0\}\times \B_{\mathrm{r}} (0)$ in the Gromov topology. Let $(\varepsilon_n, z_n)$
be a sequence that converges to $(0,0)$.
By Theorem $4.6.1$ and Definition $5.2.1$ in \cite{McSa},
we conclude that $\Phi(\varepsilon_n, z_n)$ converges in Gromov topology to $[v]$. Elsewhere $\Phi$ is obviously continuous.

Next, we observe that
Corollary \ref{injective_transverse_intersection} combined with the fact that $\alpha_z$ is
first order in $z$, implies that
$\Phi(\varepsilon, z)$ is injective.
It remains to show that $\Phi(\varepsilon, z)$ surjects onto an open neighborhood of $[v]$.
First, define
\begin{align*}
\pi\,:\, \overline{\mathcal{M}}_{0, \delta_{\beta}}
(X, \beta; p_1, \cdots, p_{\delta_{\beta_1}}, q_1, \cdots,
q_{\delta_{\beta_2}})\,\longrightarrow\, \overline{\mathcal{M}}_{0, \delta_{\beta}-1}
(X, \beta; p_1, \cdots, p_{\delta_{\beta_1}}, q_1, \cdots,
q_{\delta_{\beta_2}})
\end{align*}
to be the map that forgets the last marked point. Also define
the following map
\begin{align*}
\Psi \,:\,(\mathrm{B}_{\mathrm{r}}(0)-0)  \,\longrightarrow\,
\mathcal{M}_{0, \delta_{\beta}-1}^*
(X, \beta; p_1, \cdots, p_{\delta_{\beta_1}}, q_1, \cdots,
q_{\delta_{\beta_2}})\, , \ \
\varepsilon\,\longmapsto \,\pi \circ \Phi (\varepsilon, z).
\end{align*}
Note that $\pi ([v])$ is the bubble map determined by $v_{\A}$
and $v_{\B}$, with the $\delta_{\beta}-1$ marked points
distributed accordingly on the domain, but with no free marked point. In
particular there is no ghost bubble. Now, by Theorem $10.1.2$
and the arguments presented on Page $384$ and $385$ in \cite{McSa},
we conclude that there exists a smooth surjection from an open
ball in $\mathbb{R}^2$ to an open neighborhood of $\pi ([v])$ in
$$\mathcal{M}_{0, \delta_{\beta}-1}^*
(X, \beta; p_1, \cdots, p_{\delta_{\beta_1}}, q_1, \cdots,
q_{\delta_{\beta_2}}).$$
Hence an open neighborhood of $\pi([v])$ is one copy of $\mathbb{C}$
(i.e., it has just one branch). Since $\Psi$ (like $\Phi$) also extends as a continuous injection, it has to be a surjection onto an open neighborhood of $0$ by invariance of domain. Finally, we need to show that $\Phi$ is surjective. First we
note that if $[\widetilde{u}_{\varepsilon}, \widetilde{\mkdpt}(\varepsilon), \widetilde{\mkdptt}(\varepsilon),
\widetilde{\alpha}_z] $
Gromov converges to $[v]$ then $[\widetilde{u}_{\varepsilon}, \widetilde{\mkdpt}(\varepsilon),
\widetilde{\mkdptt}(\varepsilon)]$
Gromov Converges to $\pi([v])$. By the surjectivity of $\Psi$, we conclude that
$$[\widetilde{u}_{\varepsilon}, \widetilde{\mkdpt}(\varepsilon),
\widetilde{\mkdptt}(\varepsilon)] \,=\, \Psi(\varepsilon^{\prime})$$
for some $\varepsilon^{\prime}$. Finally, we observe that if
$[\Psi(\varepsilon^{\prime}), \widetilde{\alpha}_{z}]$ converges to
$[v]$ then $\widetilde{\alpha}_z \,=\, [1,m+z]$.
Hence, $\Phi$ is a surjection onto an open neighborhood of $[v]$.
\end{proof}

\begin{rem}
\label{mulitply_covered_curves}
In the above proof some care is needed to use Theorem $10.1.2$ in \cite{McSa}.
In \cite{McSa}, the Gluing Theorem (i.e., Theorem $10.1.2$)
holds when we are allowed to vary almost complex structure; however
the authors explicitly state that their Theorems do hold for a fixed (almost) complex
structure \textsf{provided we restrict ourselves to non multiply covered curves}. Hence,
for our arguments to go through, it is essential that the maps $v_{\A}$ and $v_{\B}$
are non-multiply covered. In fact, if $v_{\A}$ or $v_{\B}$ were multiply
covered, then our assertion that the normal neighborhood has a single branch is
false. There are examples of multiply
covered curves in the boundary of $\overline{\mathcal{M}}_{0,0}(X, \beta)$ whose
normal neighborhood has more than one branch when $X$ is a del-Pezzo surface.
However,
since the $\delta_{\beta}-1$ points are generic,
multiply covered curves
do not arise and hence do not play a role in our computations.
\end{rem}

\subsection{Multiplicity Computation}
\label{multiplicity_proof}

We are now ready to compute the multiplicity.
Since the map $\Phi$ defined in Lemma \ref{gluing_surjective}
is a bijection, it suffices to count the number of
solutions to the set of equations
\begin{align*}
d u_{\varepsilon}^{\widehat{\mathcal{R}}}\circ \varphi_{\varepsilon}\vert_{\fmkdpt_z}
\,= \,\nu
\end{align*}
for a small perturbation $\nu$. Here $u_{\varepsilon, t(\varepsilon),
s(\varepsilon)}^{\widehat{\mathcal{R}}}$ is abbreviated as
$u_{\varepsilon}^{\widehat{\mathcal{R}}}$.
Write this equation in local coordinates. Set
$w\,:= \,\frac{Y}{X}$. Assuming that the zeroth and the first coordinates in
$\mathbb{P}^n$ are free and the $n$-th coordinate
is non zero, the map $u_{\varepsilon}^{\widehat{\mathcal{R}}}\circ \varphi_{\varepsilon}$ in
local coordinates is
given by $\F_{0}(\varepsilon, w), \F_1(\varepsilon, w)$ where $\F_{\mu}$ are
defined to be
\begin{align*}
\F_{\mu}(\varepsilon, w) &\,:=\, \frac{\A^{\mu}(1, \varepsilon w)  +
\frac{\A^{\mu}(1,0)}{\B^{\mu}(0,1)} \frac{\B^{\mu}(\varepsilon, w)}{w^{d_2}} - \A^{\mu}(1,0)}
{\A^{n}(1, \varepsilon w)  +
\frac{\A^n(1,0)}{\B^n(0,1)} \frac{\B^n(\varepsilon, w)}{w^{d_2}} - \A^{n}(1,0)}, \qquad \mu = 0, 1.
\end{align*}
Let
\begin{align*}
\G_{\mu}(\varepsilon, w) &\,:=\, \frac{\partial \F_{\mu}(\varepsilon, w)}{ \partial w}, \qquad \mu =0, 1.
\end{align*}
It is now easy to see that
\begin{align}
\G_{\mu} &\,:=\, -\frac{\A^{\mu}_0}{\A^n_0}
\Big(\frac{1}{\B^{\mu}_{d_2}} - \frac{\A^n_1}{\A^n_0 \B^n_{d_2}} \Big)
\varepsilon + \mathrm{H}(\varepsilon, w) \varepsilon^2, \qquad
\mu =0, 1, \label{g_mu}
\end{align}
where $\mathrm{H}(\varepsilon, w)$ is holomorphic function near $(0,m)$. (Recall
that we have taken $m$ to be a fixed nonzero complex number.)
Since $\G_{\mu}(\varepsilon, w)$ is well-defined it follows that
$\A^n_{0}$ and $\B^{n}_{d_2}$ are non-zero.
Note that $\G_0(0,w) \,=\,0$ and $\G_1(0,w)\,=\,0$. We need to find the order to which
this vanishing takes place near the point $(0, m)$.
We now make a change of coordinates near $(0,m)$ given
by
\begin{align}
\widetilde{\varepsilon} &:= \G_0(\varepsilon, w) \qquad \textnormal{and} \qquad \widetilde{w} := w.
\label{coord_change}
\end{align}
If
\begin{align}
-\frac{\A^{0}_0}{\A^n_0}
\Big(\frac{1}{\B^{0}_{d_2}} - \frac{\A^n_1}{\A^n_0 \B^n_{d_2}} \Big) &\neq 0, \label{change_coord_generic}
\end{align}
then \eqref{coord_change} defines a change of coordinates near $(0,m)$ (see \eqref{g_mu}).
Let us justify why we can assume \eqref{change_coord_generic} holds. First of all we note
that
\[ v_{\A}([0,1]) = [\A^{0}_0, \A^1_0, \A^2_0, \cdots, \A^n_0] \qquad \textnormal{and}
\qquad v_{\B}([1,0]) = [\B^{0}_{d_2}, \B^1_{d_2}, \B^2_{d_2}, \cdots, \B^n_{d_2}].\]
Consider the following $\mathbb{C}^2$ inside $\mathbb{C}^{n+1}$, namely: 
\[ L := (*, *, 0 , \cdots, 0) \subset \mathbb{C}^{n+1}. \]
We will now consider automorphisms of $\mathbb{P}^n$ induced from an 
automorphism of $\mathbb{C}^{n+1}$ that acts non trivially on $L$ 
and acts as identity on 
\[ (0,0, *, \cdots, *) \subset \mathbb{C}^{n+1}. \]
We claim that we 
can find such an automorphism 
moving $v_{\A}([0,1])$ and $v_{\B}([1,0])$ to
two points such that
\begin{align}
\A^{0}_0, \A^1_0 \neq 0 \qquad \textnormal{and} \qquad
\Big(\frac{1}{\B^{0}_{d_2}} - \frac{\A^n_1}{\A^n_0 \B^n_{d_2}} \Big),
~\Big(\frac{1}{\B^{1}_{d_2}} - \frac{\A^n_1}{\A^n_0 \B^n_{d_2}} \Big) \neq 0. \label{aut_eqn}
\end{align}
To see why this is so, we will consider three cases. 
Suppose $\B^0_{d_2} \neq \frac{\A^n_0 \B^n_{d_2}}{\A^n_1}$ 
and $\B^1_{d_2} \neq \frac{\A^n_0 \B^n_{d_2}}{\A^n_1}$. Then 
we take an automorphism that fixes $\B^{0}_{d_2}$ and $\B^{1}_{d_2}$ 
and takes both $\A^0_0$ and $\A^1_0$ to something non zero. 
Next, if $\B^0_{d_2} = \frac{\A^n_0 \B^n_{d_2}}{\A^n_1}$, 
but $\B^1_{d_2} \neq \frac{\A^n_0 \B^n_{d_2}}{\A^n_1}$, 
then we take an automorphism of $\mathbb{C}^{n+1}$ 
that takes $\B^0_{d_2}$ to $2 \B^0_{d_2}$, takes $\B^1_{d_2}$ 
to $\B^1_{d_2}$ and takes both $\A^0_0$ and $\A^1_0$ to something non zero. 
Similar argument holds if $\B^0_{d_2} \neq  \frac{\A^n_0 \B^n_{d_2}}{\A^n_1}$, 
but $\B^1_{d_2} =0 \frac{\A^n_0 \B^n_{d_2}}{\A^n_1}$. Finally, suppose 
$\B^0_{d_2} = \frac{\A^n_0 \B^n_{d_2}}{\A^n_1}$ and 
$\B^1_{d_2} =0 \frac{\A^n_0 \B^n_{d_2}}{\A^n_1}$. Then 
we take an automorphism of $\mathbb{C}^{n+1}$ that takes 
$\B^0_{d_2}$ and $\B^1_{d_2} $ to $2\B^0_{d_2}$ and $2\B^1_{d_2}$ 
and both $\A^0_0$ and $\A^1_0$ to something non zero. That covers 
all the cases. 

Equation \eqref{aut_eqn} implies that \eqref{change_coord_generic} holds; in addition, it
also implies  that
\begin{align}
-\frac{\A^{0}_0}{\A^n_0}
\Big(\frac{1}{\B^{1}_{d_2}} - \frac{\A^n_1}{\A^n_0 \B^n_{d_2}} \Big) &\neq 0, \label{change_coord_generic_again}
\end{align}
holds. Note that since our automorphism only acts on $L$, the initial assumptions we made about
the zeroth and first coordinate being free and the $n^{\textnormal{th}}$ coordinate being
non zero, is still valid.
Let
\begin{align*}
\widetilde{\G}_{\mu}(\widetilde{\varepsilon}, \widetilde{w}) &:= \G_{\mu}(\varepsilon, w), \qquad \mu =0,1.
\end{align*}
Hence
\begin{align*}
\G_0(\widetilde{\varepsilon}, \widetilde{w}) &:= \widetilde{\varepsilon} \qquad \textnormal{and}
\qquad \G_1(\widetilde{\varepsilon}, \widetilde{w}) := \widetilde{\varepsilon}
\mathrm{K}(\widetilde{\varepsilon}, \widetilde{w}),
\end{align*}
for some function $\mathrm{K}(\widetilde{\varepsilon}, \widetilde{w})$.
It is easy to see that since \eqref{change_coord_generic} and \eqref{change_coord_generic_again} hold,
$\mathrm{K}(0, m) \neq 0$ if $m \neq 0$.
Next, let $\nu_0(\widetilde{\varepsilon}, w)$
and $\nu_1(\widetilde{\varepsilon}, w)$ be two holomorphic
perturbations (they are defined only in a neighborhood of $(0,m)$),
whose Taylor
expansion near $(0,m)$ is given by
\begin{align}
\nu_0(\widetilde{\varepsilon}, w) &:= a_{00} + a_{10} \widetilde{\varepsilon} + a_{01} (w-m) + \ldots
\qquad \textnormal{and} \nonumber \\
\nu_1(\widetilde{\varepsilon}, w) & := b_{00} + b_{10} \widetilde{\varepsilon} + b_{01} (w-m) + \ldots
\label{nu_taylor}
\end{align}
By saying that $\nu_{0}$ and $\nu_1$ are perturbations, we mean that the
the constant terms $a_{00}$ and $b_{00}$ in
the Taylor expansion are all small.
Now, we need to solve for
\begin{align}
\widetilde{\varepsilon} & =  \nu_0(\widetilde{\varepsilon}, w) \qquad
\textnormal{and} \label{nu0}\\
\widetilde{\varepsilon}\mathrm{K}(\widetilde{\varepsilon}, \widetilde{w})
& = \nu_1(\widetilde{\varepsilon}, w). \label{nu1}
\end{align}
Using \eqref{nu1} and \eqref{nu_taylor},
we conclude that
\begin{align}
\widetilde{\varepsilon} &= \frac{b_{00}}{\mathrm{K}(0,m)} + \mathrm{O}(w-m). \label{ep_tilde_zeroth_order}
\end{align}
Using \eqref{ep_tilde_zeroth_order}, \eqref{nu0}
and \eqref{nu_taylor}, we conclude that
\begin{align}
w-m & =\frac{\frac{b_{00}(1-a_{10})}{\mathrm{K}(0,m)} - a_{00}}{a_{10}} + \mathrm{O}((w-m)^2). \label{w_minus_m_soln}
\end{align}
Equation \eqref{w_minus_m_soln} implies that
if the perturbation $\nu$ is generic then
there exists a unique solution $w$ in a sufficiently small neighborhood
of $m$. Since the $\nu_i$ are chosen to be holomorphic, it will be counted with
a positive sign. Hence the multiplicity is one.

\section{Proof of transversality and general position arguments}
\label{transverse_proof}

It remains to show that the section $\psi$ induced by taking the derivative at a
marked point is transverse to the zero set, and that there exists a rational curve in $(X,\beta)$ that has exactly one genuine cusp and is an immersion otherwise. 
We prove the latter first. In what follows whenever we count nodes we do so along with their multiplicities.  

\begin{prp}\label{Existenceofagenuinecusp}
Let $X$ be the blowup of $\mathbb{P}^2$ at $k$ generic points $q_i$ (with exceptional divisors $E_i$) 
and let $\beta := d L - \displaystyle \sum _i m_i E_i$ 
be a homology class such that $N_{\beta-3L} >0$. 
Then, there exists a non-multiply covered rational curve in 
the class $\beta$ 
having exactly one genuine cusp; furthermore, the curve is an immersion everywhere else.  
\end{prp}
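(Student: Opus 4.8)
The plan is to produce the cuspidal curve by degeneration: exhibit a reducible genus-zero stable map built from a rational curve in class $\beta-3L$ and a cuspidal cubic in class $3L$, and then argue that the cusp survives when the node joining the two components is smoothed. The hypothesis $N_{\beta-3L}>0$ enters precisely to supply the first component, while the cuspidal cubic contributes the cusp. The role of the blow-up hypothesis $k\le 8$ is to guarantee the del-Pezzo condition so that the cited moduli-space results apply.

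First I would fix the ingredients. Since $N_{\beta-3L}>0$, choose a non-multiply-covered rational curve $C_0$ in class $\beta-3L$; for a generic choice it is an immersion with only nodes. A generic cuspidal plane cubic avoids the $k\le 8$ generic base points $q_i$, so its proper transform $C_1$ is a rational curve in class exactly $3L$ with one ordinary cusp and an immersion elsewhere. Genericity lets me further arrange that $C_1$ meets $C_0$ transversally, away from the cusp of $C_1$, the exceptional divisors, and the nodes of $C_0$ and $C_1$. Attaching two copies of $\mathbb P^1$ at a node lying over one intersection point of $C_0$ and $C_1$ produces a genus-zero stable map $f_0$ in class $\beta$, lying on the boundary divisor $\mathcal B_{\beta-3L,\,3L}$, equipped with one marked point placed at the cusp $c$ of $C_1$.

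The geometric step is to smooth the joining node, which the gluing theorem of Section \ref{degenerate_contrib} (see \cite{McSa} and the discussion around Lemma \ref{gluing_surjective}) achieves, yielding irreducible rational maps $\mathbb P^1\to X$ in class $\beta$ near $f_0$; by Remark \ref{mulitply_covered_curves} genericity keeps these maps non-multiply-covered. The crux is that the cusp persists, and I would establish this by a dimension argument rather than by tracking the smoothed maps directly. Put one marked point on $\overline{\mathcal M}_{0,1}(X,\beta)$, whose dimension is $\delta_\beta+1$, and consider the extended derivative section $\psi=du\vert_{y}$ of the rank-two bundle $\mathcal L_1^*\otimes\mathrm{ev}_1^*TX$ of Section \ref{sec2}; its zero locus $Z$ is exactly the locus of curves marked at a cusp. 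The point $[f_0,c]$ shows $Z\neq\varnothing$, and since $\psi$ is a section of a rank-two bundle, every irreducible component of $Z$ has codimension at most two in $\overline{\mathcal M}_{0,1}(X,\beta)$, hence dimension at least $\delta_\beta-1$.

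To conclude I would show this component escapes the boundary. The domain of $f_0$ has a single node, so $[f_0,c]$ lies on a unique boundary divisor, namely $\mathcal B_{\beta-3L,\,3L}$; an irreducible subvariety of $Z$ contained in the boundary and passing through $[f_0,c]$ must therefore lie inside $\mathcal B_{\beta-3L,\,3L}$. But the sublocus of $\mathcal B_{\beta-3L,\,3L}$ on which the marked point sits at a cusp forces the $3L$-component to be a cuspidal cubic, so it has dimension $\delta_{\beta-3L}+(\delta_{3L}-1)=\delta_\beta-2<\delta_\beta-1$; on any other boundary divisor the generic stable map is a transverse union of immersed rational curves and carries no cusp, so its cuspidal sublocus is likewise smaller. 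Using the irreducibility and expected dimension of $\overline{\mathcal M}_{0,0}(X,\beta)$ from \cite{D_Testa}, the component of $Z$ through $[f_0,c]$ is thus not contained in the boundary and meets the interior $\mathcal M_{0,1}^*(X,\beta)$; a generic point of it is a non-multiply-covered irreducible rational curve with $du$ vanishing at the marked point, i.e.\ a genuine cusp. Finally, curves carrying a more degenerate singularity or a second cusp sweep out loci of dimension strictly below $\delta_\beta-1$, so a generic member of this family has exactly one ordinary cusp and is an immersion everywhere else. I expect the main obstacle to be exactly this persistence of the cusp, that is, forcing the positive-dimensional component of $Z$ out of the boundary and into the interior; it is what compels the dimension bookkeeping above together with the appeal to the irreducibility result of \cite{D_Testa}.
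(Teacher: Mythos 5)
Your setup coincides with the paper's: both arguments glue an immersed curve in class $\beta-3L$ (supplied by $N_{\beta-3L}>0$) to a cuspidal cubic in class $3L$ and then smooth the connecting node. The divergence is in how one shows that the cusp survives as a \emph{genuine} cusp on an irreducible curve that is an immersion elsewhere. The paper does this explicitly: the cusp of the cubic persists under the explicit gluing $u^{\widehat{\R}}_{\varepsilon,t,s}$ of Section \ref{degenerate_contrib} by an implicit-function argument on the equations $du=0$, and then an exact node count ($3(d-3)-1$ new nodes plus the $\binom{d-4}{2}-t$ nodes of the $(\beta-3L)$-component) shows the total contribution of the singularities already equals the arithmetic genus $\binom{d-1}{2}-t$ of the class $\beta$; hence the one remaining singularity can contribute only $1$ and must be an ordinary cusp, and there is no room for any further non-immersion point.

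Your replacement of that step is where the gap lies. The dimension count for the zero locus $Z$ of $\psi$ does produce an irreducible component $W\ni[f_0,c]$ of dimension at least $\delta_\beta-1$ meeting the interior (modulo one point you pass over: the ghost-bubble locus in the closure of $\mathcal{B}_{\beta-3L,\,3L}$ is also contained in $Z$ and has dimension exactly $\delta_\beta-1$, so it is not ruled out by your inequality; it is harmless only because $[f_0,c]$, having a two-component domain, does not lie in its closure, and this should be said). But knowing that a generic point of $W$ is an irreducible curve with $du$ vanishing at the marked point does not yield a genuine cusp: $du=0$ is compatible with higher ramification or with an image singularity worse than $A_2$. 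Your assertion that curves with a more degenerate singularity or a second cusp sweep out loci of dimension strictly below $\delta_\beta-1$ is precisely the kind of statement that requires a transversality argument, and in this paper transversality (Lemma \ref{Transversality}) is itself deduced from Proposition \ref{Existenceofagenuinecusp} --- invoking it here is circular. Likewise ``immersion everywhere else,'' in particular near the smoothed node, is not addressed. The first part of the gap is repairable without new dimension counts: since ``ordinary cusp'' is an open condition among zeros of $du$ and Gromov convergence to $f_0$ is $C^\infty$ away from the node, interior points of $W$ near $[f_0,c]$ inherit a genuine cusp from the cubic; but controlling the behaviour near the node still seems to need either the explicit gluing estimates or the paper's genus bookkeeping.
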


\begin{proof} Since $N_{\beta-3L}>0$, there exists an immersion 
$v: \mathbb{P}^1 \longrightarrow X$ representing the class $\beta-3L$ 
(cf. Theorem $4.1$, \cite{Pandh_Gott}). Let 
$c$ represent the homology class of a 
genuine cuspidal cubic in $X$; i.e. the homology class of $c$ is $3L$.   
Choose the cubic to intersect 
$v$ transversally at $3(d-3)$ 
non-singular points. 
We will construct a cuspidal curve in the class 
$(\beta -3L) + 3L$ by 
considering the bubble map formed by 
$v_A = v$ and $v_B = c$ where we may assume 
without loss of generality that $v_A ([1:0])=v_B([0:1])$. 
Now we are in the setting of Section \ref{degenerate_contrib}. 
Using the same notation as in that section, consider the map 
$u_{\varepsilon, t, s}^{\widehat{\R}}$ (whose image is by construction is required to lie in $X$). 
If $\varepsilon$ is small enough 
then it is easy to see that the number of nodes of 
this perturbed degree $d$ curve is $3(d-3)-1$ more than the number for 
$v_A$ (which is incidentally, ${d-4 \choose 2}-t$ where 
$t$ depends only on $m_i$) because we ``resolve" a node corresponding to the 
bubble point (which is the 
point $v_A([1:0])= v_B([0,1])$).

Let $v_B$ have its cusp at the point $[1:z]$. 
If we require $u_{\varepsilon, t, s}^{\widehat{\J}}$ to have a cusp at $[1:y]$ then this 
implies (assuming that the coordinate $\frac{X^{\mu_1}}{X^0}$ and $\frac{X^{\mu_2}}{X^0}$ are free in $X$) that 
\begin{gather}
\left(d\left(\frac{u^{\mu_1}}{u^{0}}\right), d\left(\frac{u^{\mu_2}}{u^{0}}\right)\right) = (0,0) \nonumber \\
\Rightarrow \left(\frac{Q_1 (y)}{Q_0(y)}, \frac{Q_2(y)}{Q_0(y)}\right) = (0,0),
\end{gather}
where $Q_i$ are polynomials whose coefficients depend rationally on $\varepsilon, t, s$ and $Q_1, Q_2$ 
have a common root $z$ when $\varepsilon=0=s$. Since the variety defined by $Q_1, Q_2$ (treated as a 
function of $\varepsilon, y, s$) is non-empty near $0,z,0$, there exists a small 
enough $(y-z,\varepsilon,s)$ such that $\varepsilon \neq 0$ so that $du(y)=0$.

In summary, the perturbed map has one cusp and $3(d-3)-1+{d-4\choose 2}-t$ nodes. 
By genus considerations the maximum number of nodes/cusps that a 
rational curve of class $\beta$ have is ${d-1\choose 2}-t$. 
Since that maximum number has been attained, 
the cusp has to be a genuine one (since anything worse than a cusp would contribute more than $1$ to the genus).
\end{proof}

Using a similar construction as in Proposition \ref{Existenceofagenuinecusp} we prove 
that cuspidal curves form a submanifold by constructing a genuine cuspidal curve at which transversality holds. 

\begin{lmm}\label{Transversality}
Let 
$\psi\,:\, \mathcal{M}_{0,\delta _{\beta}-1}^{*}(X, \beta ; p_1, p_2,\ldots,p_{\delta _{\beta}-1}) \,\longrightarrow\,
\xii_1^* \otimes \textnormal{ev}_1^* TX$
be the section induced by taking the derivative at the marked point, i.e.,
\[ \psi([u; z])\,:=\,  du\vert_{z}\, . \]
If $N_{\beta -3L} >0$, then $\psi$ is transverse to the zero section
\end{lmm}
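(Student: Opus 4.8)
The plan is to reduce the lemma to the surjectivity of a single jet-evaluation map at a genuine cusp, to verify that surjectivity on the explicit model curve furnished by Proposition \ref{Existenceofagenuinecusp}, and then to propagate it by genericity. Recall that $\psi$ vanishes precisely at the cuspidal curves. If $[u_0;z_0]$ is such a zero, so that $du_0|_{z_0}=0$, then the vertical part of the differential of $\psi$ is canonically defined and, in a local coordinate $w$ on the domain near $z_0$ and a local trivialisation of the bundle, is the map
\[
T_{[u_0;z_0]}\mathcal{M}^{*}\,\longrightarrow\,\big(\xii_1^{*}\otimes \mathrm{ev}_1^{*}TX\big)\big|_{[u_0;z_0]}\cong T_{u_0(z_0)}X,\qquad (\xi,\dot z)\,\longmapsto\,\partial_w\xi(z_0)+\dot z\,u_0''(z_0),
\]
where $\xi\in H^0(\mathbb{P}^1,u_0^{*}TX)$ is a deformation of the map and $\dot z$ a motion of the marked point; the connection terms drop out exactly because $du_0|_{z_0}=0$. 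Thus transversality of $\psi$ is equivalent to this map being onto the rank-two fibre. One of the two directions is immediate: since the $\delta_\beta-1$ incidence points are generic, the curve carries a \emph{genuine} cusp at $z_0$ (as opposed to a more degenerate singularity), so $du_0$ vanishes to order exactly one there and $u_0''(z_0)\neq 0$. Hence varying $\dot z$ alone sweeps out the line $\C\cdot u_0''(z_0)$, and it remains only to exhibit a map-deformation $\xi$ with $\partial_w\xi(z_0)\notin\C\cdot u_0''(z_0)$.

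I would obtain this second direction on the model curve built in Proposition \ref{Existenceofagenuinecusp}, namely the glued curve $u^{\widehat{\R}}_{\varepsilon,t(\varepsilon),s(\varepsilon)}$ obtained from an immersion $v_{\A}$ in the class $\beta-3L$ (which exists since $N_{\beta-3L}>0$) and a cuspidal cubic $v_{\B}$, for a small fixed $\varepsilon\neq 0$ at which the cusp of $v_{\B}$ survives. Near this curve the two-dimensional moduli space is parametrised by $(\varepsilon,z)$, where $z$ is the marked point carrying the cusp and $\varepsilon$ moves the map within the constrained family provided by Lemma \ref{ift_gluing}. Transversality then becomes the invertibility of the $2\times 2$ Jacobian of $(\varepsilon,z)\mapsto du|_{z}\in\C^{2}$ at the model point, which is exactly the type of non-degeneracy extracted in the local computation of Section \ref{multiplicity_proof}: working in the coordinates of Section \ref{degenerate_contrib} and expanding as in \eqref{g_mu}--\eqref{ep_tilde_zeroth_order}, the $\dot z$-row produces $u_0''(z_0)$ while the $\partial_\varepsilon$-row produces a direction whose independence is controlled by a non-vanishing leading coefficient analogous to $\mathrm{K}(0,m)\neq 0$. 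This shows $D\psi$ is an isomorphism at the model curve.

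Finally I would globalise. It is cleanest to phrase transversality on the unconstrained space $\mathcal{M}^{*}_{0,\delta_\beta}(X,\beta)$ and recover the constrained statement of the lemma by a standard parametric-transversality (Sard) argument, using genericity of the points $p_i$ together with the generic submersivity of the evaluation maps; this is also what justifies the interpretation of \eqref{Euler_du}. Since $\overline{\mathcal{M}}_{0,0}(X,\beta)$ is irreducible of the expected dimension by \cite{D_Testa}, the cuspidal curves through generic points vary in an irreducible family, and the model computation shows that the degeneracy locus of $D\psi$ does not contain the generic member; hence for a generic incidence configuration no cuspidal curve through the points lies in the bad locus, and $\psi$ is transverse to the zero section. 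The main obstacle is the middle step: isolating the second jet direction while keeping the computation at the cusp disentangled from the many gluing parameters, i.e.\ pinning down the analogue of $\mathrm{K}(0,m)\neq 0$. The hypothesis $N_{\beta-3L}>0$ enters precisely to supply the model curve on which this non-degeneracy can be checked.
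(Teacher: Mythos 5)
Your overall strategy coincides with the paper's: use $N_{\beta-3L}>0$ to manufacture a model genuine cuspidal curve by gluing a cuspidal cubic (class $3L$) to an immersion in class $\beta-3L$ via the construction of Section \ref{degenerate_contrib}, check transversality at that one curve, and then propagate by genericity of the points $p_i$. Your reduction of transversality to the surjectivity of $(\xi,\dot z)\mapsto \partial_w\xi(z_0)+\dot z\,u_0''(z_0)$, and the observation that a \emph{genuine} cusp gives the line $\C\cdot u_0''(z_0)$ for free from the $\dot z$ direction, is a clean reformulation that the paper does not use and that would shave the problem down to producing a single map-deformation $\xi$ with $\partial_w\xi(z_0)\notin\C\cdot u_0''(z_0)$.

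However, that remaining step is exactly where your proposal has a genuine gap, and you acknowledge as much by calling it ``the main obstacle.'' The pointer you give --- that the needed non-degeneracy is ``analogous to $\mathrm{K}(0,m)\neq 0$'' from Section \ref{multiplicity_proof} --- does not work: that computation is localized at the point $(0,m)$ with the marked point $\fmkdpt_z=[1,m+z]$ sitting on the \emph{ghost bubble}, and it measures the order of vanishing of the extended section at the boundary stratum $\varepsilon=0$. The cusp of the glued curve, by contrast, lives near the cusp of the cubic on the $\B$-bubble, at a location $y(\varepsilon,t,s)$ that migrates with the gluing parameters and is not in the coordinate patch where $\G_\mu$ is expanded; so the invertibility of your $2\times 2$ Jacobian at the interior cuspidal curve is a different calculation, not a corollary of $\mathrm{K}(0,m)\neq 0$. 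The paper closes this gap by a different mechanism: it takes as input that transversality holds for the cuspidal cubic itself (two paths $(v_{Ai}(\tau),z_i(\tau))$ in $\mathcal{M}^*_{0,1}(X,3L)$ with $\tau\mapsto dv_{Ai}(\tau)|_{z_i(\tau)}$ having independent derivatives at $\tau=0$), transports both paths through the gluing, and argues that the two derivative vectors remain independent for $\varepsilon$ small by continuity --- thereby never needing to isolate the second jet at the glued cusp. Separately, your globalisation step leans on irreducibility of $\overline{\mathcal{M}}_{0,0}(X,\beta)$, which does not by itself give irreducibility of the cuspidal locus; the paper's (also terse) dimension-count phrasing --- the failure locus is a proper subvariety, hence misses $\delta_\beta-1$ generic point conditions --- is the argument you actually want there.
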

\begin{proof}
We will actually construct a (genuine) cuspidal curve in $(X,\beta)$ such that 
the section $\psi\,:\, \mathcal{M}_{0,1}^{*}(X, \beta) \,\longrightarrow\,
\xii_1^* \otimes \textnormal{ev}_1^* TX$ (i.e., $X$ with one marked point) is transverse at this curve. This means that curves at which transversality fails form a proper subvariety and therefore, by the requirement of passing through $\delta _{\beta} -1$ generic points we may conclude that transversality holds for all such cuspidal curves.

Certainly we may find a cuspidal cubic $v_A$ whose homology class 
is $3L$ for which transversality holds. 
We claim that gluing $v_A$  
with an immersion representing $\beta-3L$ 
(just as in the proof of Proposition \ref{Existenceofagenuinecusp})
produces a genuine cuspidal curve for which the section $\psi$ is 
transverse to the zero section. Indeed, suppose that $(v_{A1}(\tau),z_1(\tau))$ and $(v_{A2}(\tau),z_2(\tau)$ are 
two paths in $\mathcal{M}_{0,1}^{*}(X, 3L)$ such that they pass through $v_A$ at $\tau=0$, and that the 
tangent vectors to $dv_{Ai}(\tau)(z_i(\tau))$  are linearly independent at $\tau=0$ (this is true because 
we assumed transversality for $v_A$). Then consider the perturbed families of maps 
$u_{1,\varepsilon, t, s,\tau}^{\widehat{\R}}$ and $u_{2,\varepsilon, t, s,\tau}^{\widehat{\R}}$. It is easy 
to see that if $\varepsilon$ is small enough then for this perturbed family of maps $du_{i,\tau} (z_i(\tau)+y-z_i(0))$ have linearly 
independent tangent vectors at $\tau=0$ where $y$ is the location of the cusp of the perturbed map when $\tau=0$. 
(Note that $y$ depends on $\varepsilon, t,s$.)
\end{proof}

\section*{Acknowledgements}

The third author is grateful to Aleksey Zinger, Jingchen Niu and Somnath Basu for
helping him understand the relevant parts of \cite{g2p2and3}. He is also grateful
to Jim Bryan for pointing out \cite{Pandh_Gott} which establishes the
crucial fact that genus zero Gromov--Witten invariants on del-Pezzo surfaces are
enumerative.

\end{document}